\newtheorem{theorem}{Theorem}[section]
\newtheorem{lemma}[theorem]{Lemma}
\newtheorem{proposition}[theorem]{Proposition}
\newtheorem{corollary}[theorem]{Corollary}
\theoremstyle{definition}
\theoremstyle{remark}
\newtheorem{remark}[theorem]{Remark}
\numberwithin{equation}{section}
\newcommand{\Id}{\mathrm{Id}}
\newcommand{\Var}{\mathrm{Var}}
\newcommand{\ARatMP}{\mathbf{ARatMP}}
\newcommand{\UT}{\mathrm{UT}}
\newcommand{\N}{\mathbb{N}}
\newcommand{\Z}{\mathbb{Z}}
\newcommand{\GKP}[2]{\mathbf{GKP}(#1)}
\newcommand{\KP}[2]{\mathbf{KP}(#1)}
\newcommand{\SSP}[2]{\mathbf{SSP}(#1)}
\newcommand{\autstep}[1][YYY]{\ifthenelse{\equal{#1}{YYY}}{\rightarrow}{\rightarrow_{#1}}}
\newcommand{\autsteps}[1][YYY]{\ifthenelse{\equal{#1}{YYY}}{\rightarrow^*}{\rightarrow^*_{#1}}}
\newcommand{\autstepsn}[2][YYY]{\ifthenelse{\equal{#1}{YYY}}{\rightarrow^{#2}}{\rightarrow^{#2}_{#1}}}
\newcommand{\grammarstep}[1][YYY]{\ifthenelse{\equal{#1}{YYY}}{\Rightarrow}{\Rightarrow_{#1}}}
\newcommand{\grammarsteps}[1][YYY]{\ifthenelse{\equal{#1}{YYY}}{\Rightarrow^*}{\Rightarrow^*_{#1}}}
\newcommand{\grammarstepsn}[2][YYY]{\ifthenelse{\equal{#1}{YYY}}{\Rightarrow^{#2}}{\Rightarrow^{#2}_{#1}}}
\begin{document}

\title{Knapsack and subset sum problems in nilpotent, polycyclic, and co-context-free groups}

\author[1]{Daniel K\"{o}nig\thanks{koenig@eti.uni-siegen.de}}
\author[1]{Markus Lohrey\thanks{lohrey@eti.uni-siegen.de}}
\author[2]{Georg Zetzsche\thanks{zetzsche@cs.uni-kl.de}}
\affil[1]{University of Siegen, Germany}
\affil[2]{Fachbereich Informatik\\Technische Universit\"{a}t Kaiserslautern}

\maketitle

\section{Introduction}

In their paper \cite{MyNiUs14}, Myasnikov, Nikolaev, and Ushakov started the investigation of classical 
discrete integer optimization problems in general non-commutative groups.
Among other problems, they introduced for a finitely generated (f.g.) group $G$ 
the {\em knapsack problem} and the {\em subset sum problem}.
The input for the knapsack problem is a sequence of group elements $g_1, \ldots, g_k, g \in G$ 
and it is asked whether there exists a solution
$(x_1, \ldots, x_k) \in \mathbb{N}^k$
of the equation $g_1^{x_1} \cdots g_k^{x_k} = g$. For the subset sum problem one restricts the solution
to $\{0,1\}^k$.
For the particular case $G = \mathbb{Z}$  (where the additive notation 
$x_1 \cdot g_1 + \cdots + x_k \cdot g_k = g$ is usually prefered)
these problems are {\sf NP}-complete if the numbers $g_1, \ldots, g_k,g$ are 
encoded in binary representation.
For subset sum this is shown in Karp's classical paper \cite{Karp72}. The statement for 
knapsack (in the above version) can be found in \cite{Haa11}.

In \cite{MyNiUs14} the authors enocde elements of the finitely generated group $G$ by words over the group generators
and their inverses. For $G = \mathbb{Z}$ this representation corresponds to the unary encoding of integers. It is known that
for unary encoded integers, knapsack and subset sum over $\Z$ can be both solved in polynomial time, and the precise 
complexity is $\mathsf{DLOGTIME}$-uniform $\mathsf{TC}^0$ \cite{ElberfeldJT11}, which is a very small complexity class
that roughly speaking captures the complexity of multiplying binary coded integers.
In \cite{MyNiUs14}, Myasnikov et al.~proved the following
new results:
\begin{itemize}
\item Subset sum and knapsack can be solved in polynomial time for every hyperbolic group. 
\item Subset sum for a virtually nilpotent group (a finite extension of a nilpotent group) can be solved in polynomial time. 
\item For the following groups, subset sum is {\sf NP}-complete (whereas the word problem can be solved in polynomial time):
free metabelian non-abelian groups of finite rank, the wreath product $\mathbb{Z} \wr \mathbb{Z}$, Thompson's group $F$,
and the Baumslag-Solitar group $\mathrm{BS}(1,2)$.
\end{itemize}
In this paper, we continue the investigation of knapsack and subset sum for arbitrary groups.
We prove the following results, where as in \cite{MyNiUs14} group elements are represented by finite words
over the group generators and their inverses:
\begin{itemize}
\item For every virtually nilpotent group, subset sum belongs to {\sf NL} (nondeterministic logspace).
\item There is a polycyclic group with an {\sf NP}-complete subset sum problem.
\item There is a nilpotent
group of class two for which knapsack is undecidable. This nilpotent group is 
a direct product of sufficiently many copies of the discrete Heisenberg group $H_3(\mathbb{Z})$.
In \cite{Lohrey15unitri}, the second author proved that there exists a nilpotent group (of large class) for 
which knapsack is undecidable. Here we improve this result to class two and at the same time simplify the construction
from \cite{Lohrey15unitri}. As a byproduct of our construction, we show that there exists a fixed nilpotent group of class two
together with four finitely generated abelian subgroups $G_1, G_2, G_3, G_4$ such that membership in the product $G_1G_2G_3G_4$
is undecidable. It is known that membership in a product of two subgroups of a polycyclic group is decidable \cite{LeWi79}.
\item The knapsack problem for the the discrete Heisenberg group $H_3(\mathbb{Z})$ is decidable. In particular, 
together with the previous point it follows that decidability
of knapsack is not preserved under direct products. 
\item The class of groups with a decidable knapsack problem is closed under finite extensions.
\item The knapsack problem is decidable for every co-context-free group. Recall that a group is co-context-free
if the complement of the word problem is a context-free language \cite{HoReRoTh2005}. 
\end{itemize}

\section{Nilpotent and polycyclic groups}

Let $A$ be a square matrix of dimension $d$ over some commutative ring  $R$. With $A[i,j]$ we denote the entry of $A$ in row $i$ and column $j$.
The matrix $A$ is called \emph{triangular} if $A[i,j] = 0$ whenever $i > j$, i.e., all entries below the main diagonal are $0$.
A \emph{unitriangular matrix} is a triangular matrix $A$ such that $A[i,i] = 1$ for all $1 \leq i \leq d$, i.e., all entries on the main 
diagonal are $1$. We denote the set of unitriangular matrices of dimension $d$ over the ring $R$ by $\UT_d(R)$. It is well known that for every 
commutative ring $R$, the set  $\UT_d(R)$ is a group (with respect to matrix multiplication).

An \emph{$n$-step solvable group} $G$ is a group $G$ that has a 
a subnormal series $G = G_n \rhd G_{n-1} \rhd G_{n-2} \rhd \cdots \rhd G_1 \rhd G_0 = 1$
(i.e., $G_i$ is a normal subgroup of $G_{i+1}$ for all $0 \leq i \leq n-1$) such that 
every quotient $G_{i+1}/G_i$ is abelian ($0 \leq i \leq n-1$).
If every quotient $G_{i+1}/G_i$ is cyclic, then $G$ is called {\em polycyclic}.
The number of $0 \leq i \leq n-1$ such that $G_{i+1}/G_i \cong \mathbb{Z}$ is called the \emph{Hirsch length} of $G$; it does 
not depend on the chosen subnormal series.
If $G_{i+1}/G_i \cong \mathbb{Z}$ for all $0 \leq i \leq n-1$ then $G$ is called \emph{strongly polycyclic}. 
The following characterizations of the class of polycyclic groups are known:
\begin{itemize}
\item A group is polycyclic if and only if it is solvable and every subgroup is finitely generated. 
\item A group is polycyclic if and only if it is a solvable group of integer matrices; this is a famous result by
Auslander and Swan \cite{Aus67,Swa67} .
In particular, every polycyclic group is linear, i.e., can be embedded into a matrix group over some field.
\end{itemize}
For a group $G$ its \emph{lower central series} is the series $G = G_0 \rhd G_1 \rhd G_2 \rhd \cdots$ of subgroups,
where $G_{i+1}  = [G_i,G]$, which is the subgroup generated by all commutators $[g,h]$ with $g \in G_i$ and $h \in G$.
Indeed, $G_{i+1}$ is a normal subgroup of $G_i$. The group $G$ is \emph{nilpotent of class $c$}, if
$G_c = 1$. Every f.g. nilpotent group is polycyclic, and every group $\UT_d(\mathbb{Z})$
$(d \geq 1)$ is f.g.  nilpotent of class $d-1$. 

The group $\UT_3(\mathbb{Z})$ is also denoted by $H_3(\mathbb{Z})$ 
and called the {\em discrete Heisenberg group}. 
Thus, $H_3(\mathbb{Z})$ is the group of all $(3 \times 3)$-matrices of the form
$$
\left(
\begin{array}{ccc}
1 & a & c \\
0 & 1 & b \\
0 & 0 & 1
\end{array}
\right)
$$
for $a,b,c, \in \mathbb{Z}$. The center $Z(H_3(R))$ of this group consists of all matrices of the form
$$
\left(
\begin{array}{ccc}
1 & 0 & c \\
0 & 1 & 0 \\
0 & 0 & 1
\end{array}
\right)
$$
for $c \in \mathbb{Z}$. The group $H_3(\mathbb{Z})$ is  nilpotent  of class two
(it is in fact  the free nilpotent group of class two and rank two). In other words, every
commutator $A B A^{-1} B^{-1}$ ($A, B \in H_3(\mathbb{Z})$) belongs to the center 
$Z(H_3(\mathbb{Z}))$. The  identity $(3 \times 3)$-matrix will be denoted
by $\Id_3$. 
Clearly, a direct product of copies of $H_3(\mathbb{Z})$  and $\mathbb{Z}$ is 
also nilpotent of class two.

We need the following results about nilpotent groups:


\begin{theorem}[Theorem 17.2.2 in \cite{KaMe79}] \label{thm-nilpotent-subgroup}
Every f.g. nilpotent group $G$ has a torsion-free normal subgroup $H$ of finite index (which is also f.g. nilpotent).
\end{theorem}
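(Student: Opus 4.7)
The plan is to first locate all torsion in $G$ as a finite characteristic subgroup $T \trianglelefteq G$, and then use the residual finiteness of f.g.\ nilpotent groups to carve out a finite-index normal subgroup that avoids $T \setminus \{1\}$. Such a subgroup is automatically torsion-free because any torsion element inside it lies in $T$.

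The first step is to show that the set $T$ of torsion elements of $G$ is a (characteristic) subgroup. This is the standard commutator-calculus fact for nilpotent groups: if $a$ and $b$ have finite order, the Hall--Petresco expansion of $(ab)^k$ terminates after finitely many factors, each in a successive term of the lower central series of $\langle a,b\rangle$, and a straightforward induction on the nilpotency class $c$ shows that $ab$ has finite order (one can even bound the order of $ab$ in terms of the orders of $a,b$ and $c$). Closure under inversion and conjugation is immediate, so $T$ is a characteristic, hence normal, subgroup of $G$. To see that $T$ is finite, recall that f.g.\ nilpotent groups are polycyclic, so every subgroup of $G$ is f.g.; thus $T$ is a f.g.\ torsion nilpotent group. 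Another induction on class then finishes: $Z(T)$ is f.g.\ abelian and torsion, hence finite, and $T/Z(T)$ is f.g.\ torsion of strictly smaller class, hence finite by induction, so $|T| = |Z(T)| \cdot |T/Z(T)| < \infty$.

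The second step is quick once $T$ is known to be finite and normal. Using that $G$ is residually finite (a classical consequence of polycyclicity that is proved independently of the present theorem), for each $t \in T \setminus \{1\}$ I pick a finite-index normal subgroup $N_t \trianglelefteq G$ with $t \notin N_t$. Because $T$ is finite, $H := \bigcap_{t \in T \setminus \{1\}} N_t$ is a finite intersection, so it is still a finite-index normal subgroup of $G$, and by construction $H \cap T = \{1\}$. Any torsion element of $H$ lies in $T$, so it is trivial; hence $H$ is the desired torsion-free normal subgroup of finite index, and it inherits f.g.\ nilpotence from $G$. The main obstacle is the first step: all the finiteness and residual-finiteness arguments are routine, but showing that the product of two torsion elements of a nilpotent group is again torsion really does require some commutator calculus (e.g.\ Hall--Petresco) and is the only place where nilpotency, as opposed to mere polycyclicity, is used.
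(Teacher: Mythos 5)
The paper does not prove this statement at all --- it is quoted as Theorem 17.2.2 of Kargapolov--Merzljakov and used as a black box --- so there is no in-paper argument to compare against. Your proof is correct and is essentially the standard textbook argument: the torsion elements of a nilpotent group form a characteristic subgroup $T$ (the commutator-calculus/induction-on-class step you flag is indeed the only place nilpotency is genuinely needed), $T$ is finite because it is a finitely generated torsion nilpotent group (finite generation coming from polycyclicity of $G$, finiteness by induction on class via the center), and then residual finiteness of $G$ lets you separate the finitely many nontrivial elements of $T$ from a single finite-index normal subgroup $H$, which is torsion-free since every torsion element of $H$ lies in $H\cap T=\{1\}$. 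The one point worth being explicit about is the non-circularity of the appeal to residual finiteness; Hirsch's proof for polycyclic groups proceeds by induction on the Hirsch length and does not rely on the existence of torsion-free finite-index subgroups, so your argument is sound as stated.
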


\begin{theorem}[Theorem 17.2.5 in \cite{KaMe79}] \label{thm-embed-nilpotent}
For every torsion-free f.g nilpotent group $G$ there exists $d \geq 1$ such that 
$G$ can be embedded into $\UT_d(\mathbb{Z})$.
\end{theorem}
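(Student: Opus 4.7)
The plan is to first obtain a faithful representation of $G$ inside $\UT_d(\mathbb{Q})$ by Lie-theoretic means, and then to convert this into an embedding into $\UT_d(\mathbb{Z})$ by a diagonal change of basis that clears denominators.

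For the first step, I would pass to the Mal'cev completion $G^{\mathbb{Q}} \supseteq G$, a torsion-free uniquely divisible nilpotent group of the same class as $G$, and associate to it, via the Baker--Campbell--Hausdorff series (which terminates because $G^{\mathbb{Q}}$ is nilpotent), a finite-dimensional nilpotent Lie algebra $\mathfrak{g}$ over $\mathbb{Q}$ whose underlying set is $G^{\mathbb{Q}}$. Ado's theorem, together with Engel's theorem applied to the resulting faithful representation, then gives an embedding $\mathfrak{g} \hookrightarrow \mathfrak{ut}_d(\mathbb{Q})$ for some $d$; exponentiating back (again using that BCH terminates on $\mathfrak{g}$) yields a group embedding $G^{\mathbb{Q}} \hookrightarrow \UT_d(\mathbb{Q})$, whose restriction to $G$ provides a faithful $\rho : G \hookrightarrow \UT_d(\mathbb{Q})$.

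For the second step, I would fix a finite generating set $g_1, \ldots, g_k$ of $G$, let $M$ be a common denominator of all rational entries appearing in $\rho(g_1), \ldots, \rho(g_k)$, and set $D = \mathrm{diag}(1, M, M^2, \ldots, M^{d-1})$. Conjugation by $D$ scales the $(i,j)$-entry of any matrix by $M^{j-i}$; on the strictly upper triangular part $i < j$ this contributes a factor of at least $M$ and hence clears the denominator, so each $D^{-1} \rho(g_\ell) D$ already lies in $\UT_d(\mathbb{Z})$. Since $\UT_d(\mathbb{Z})$ is itself a group, the conjugated image $D^{-1} \rho(G) D$ lies entirely in $\UT_d(\mathbb{Z})$, giving the desired embedding.

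The hard part is the first step, in particular the existence of the Lie-algebra embedding $\mathfrak{g} \hookrightarrow \mathfrak{ut}_d(\mathbb{Q})$. A more self-contained alternative would proceed by induction on the nilpotency class of $G$: pick an isolated infinite cyclic central subgroup $Z \leq Z(G)$ so that $G/Z$ is again torsion-free f.g.\ nilpotent but of smaller class, apply the inductive hypothesis to obtain $G/Z \hookrightarrow \UT_{d'}(\mathbb{Z})$, and lift to $G$ using the $2$-cocycle describing the central extension $1 \to Z \to G \to G/Z \to 1$. The delicate point in this alternative is to guarantee torsion-freeness of the quotient, which one handles by passing to the isolator of the chosen central element inside $G$.
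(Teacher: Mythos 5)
The paper offers no proof of this statement at all: it is quoted as Theorem~17.2.5 of Kargapolov--Merzljakov \cite{KaMe79} and used as a black box, so there is no in-paper argument to compare yours against. Judged on its own, your main route --- Mal'cev completion, the Baker--Campbell--Hausdorff correspondence producing a finite-dimensional nilpotent Lie algebra $\mathfrak{g}$ over $\mathbb{Q}$, a faithful representation by nilpotent operators, Engel's theorem, exponentiation back to $\UT_d(\mathbb{Q})$, and finally conjugation by $\mathrm{diag}(1,M,\ldots,M^{d-1})$ to clear denominators --- is a standard and essentially correct proof. The denominator-clearing step in particular is exactly right: the $(i,j)$-entry is scaled by $M^{j-i}$, which for $i<j$ is a multiple of the common denominator $M$, and since $\UT_d(\mathbb{Z})$ is a group it suffices to integralize the finitely many generators. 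The one genuine imprecision is the phrase ``Ado's theorem, together with Engel's theorem applied to the resulting faithful representation'': the basic form of Ado's theorem only guarantees a faithful finite-dimensional representation, and a faithful representation of a nilpotent Lie algebra need not consist of nilpotent operators (already the one-dimensional abelian algebra acting by a nonzero scalar is a counterexample), so Engel's theorem does not apply to an arbitrary faithful representation. You need either the strong form of Ado's theorem (the nilradical can be made to act nilpotently) or, more cheaply for nilpotent $\mathfrak{g}$ of class $c$, Birkhoff's embedding theorem: the quotient of the universal enveloping algebra by the $(c+1)$-st power of its augmentation ideal gives a faithful representation by nilpotent operators using only Poincar\'e--Birkhoff--Witt. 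With that repair the argument is complete.

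Your ``more self-contained'' inductive alternative is shakier. The induction parameter is wrong as stated: quotienting by an isolated infinite cyclic central subgroup drops the Hirsch length by one but need not drop the nilpotency class (for instance $H_3(\mathbb{Z})\times\mathbb{Z}$ modulo its second factor still has class two), so the induction must run on Hirsch length. More importantly, the delicate point is not torsion-freeness of the quotient --- isolators do handle that, as you note --- but the lifting step itself: given $G/Z\hookrightarrow \UT_{d'}(\mathbb{Z})$, one must show the $2$-cocycle of the central extension is a polynomial function of the Mal'cev coordinates and can be realized inside a larger unitriangular group. That is where the real work of the classical proof lives, and the sketch does not address it.
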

A group is {\em virtually nilpotent} if it has a nilpotent subgroup of finite index.

\section{Subset sum and knapsack problems in groups}

Let $G$ be a f.g.~group, and fix an arbitrary finite generating set $\Sigma$ for $G$.
In this paper, we consider the following computational problems for $G$, where
elements of $G$ are represented by finite words over $\Sigma \cup \Sigma^{-1}$:
\begin{itemize}
\item Subset sum problem for $G$  (briefly $\SSP{G}{X}$): Given $g_1,\ldots, g_k, g\in G$, decide whether there exist $\varepsilon_1,\ldots,\varepsilon_k\in\{0,1\}$ such that $g = g_1^{\varepsilon_1}\cdots g_k^{\varepsilon_k}$.
\item Knapsack problem for $G$ (briefly $\KP{G}{X}$): 
Given $g_1,\ldots, g_k, g\in G$, decide whether there exist natural numbers $e_1,\ldots,e_k\geq 0$ such that $g =   g_1^{e_1}\cdots g_k^{e_k}$.
\end{itemize}
These problems were studied for general f.g. groups in \cite{MyNiUs14,FrenkelNU14}, where among others the following results
were shown:
\begin{itemize}
\item The subset sum problem for every f.g. virtually nilpotent group can be solved in polynomial time \cite{MyNiUs14}.
\item The subset sum problem and the knapsack problem for every hyperbolic group can be solved in polynomial time \cite{MyNiUs14}.
\item The knapsack problem can be solved in polynomial time in any free product of hyperbolic groups and finitely generated abelian groups 
\cite{FrenkelNU14}.
\item The subset sum problem for the following groups is {\sf NP}-complete: 
$\mathbb{Z} \wr \mathbb{Z}$, free metabelian (but non-abelian) groups of finite rank, and Thompson's group $F$ \cite{MyNiUs14}.
\end{itemize}
There is a variant of knapsack, where we ask wether for given 
$g_1,\ldots, g_k, g\in G$,  there exist {\em integers} $e_1,\ldots,e_k\in \mathbb{Z}$ such that $g =   g_1^{e_1}\cdots g_k^{e_k}$, i.e.,
whether $g$ belongs belongs to the product of cyclic groups $\langle g_1 \rangle \cdots \langle g_k \rangle$. This second version
is reducible to the above version with exponents from $\mathbb{N}$: Simply replace $g_i^{e_i}$ (with $e_i$ from $\mathbb{Z}$)
by $g_i^{c_i} (g_i^{-1})^{d_i}$ (with $c_i,d_i$ from $\mathbb{Z}$). We will prove undecidability results for the ``easier'' version with
integer quotients, whereas decidability results will be shown for the harder version with positive exponents.

\section{Subset sum problems in nilpotent groups}

In this section, we show that the subset sum problem for a finitely generated virtually nilpotent group
belongs to nondeterministic logspace ({\sf NL}). This is the class of all problems that can be solved
on a {\em nondeterministic} Turing-machine with a working tape of length $O(\log n)$, where $n$ is 
the length of the input, see e.g. \cite{AroBar09} for details.
Actually, we consider a  problem more general than the subset sum problem:
the membership problem for acyclic finite automaton, which was also studied in \cite{FrenkelNU14}.

Recall that a \emph{finite (nondeterministic) automaton} over a finite alphabet $\Sigma$ is a tuple 
$\mathcal{A} = (Q, \Delta, q_0, F)$, where 
\begin{itemize}
\item $Q$ is a finite set of states, 
\item $\Delta \subseteq Q \times \Sigma^* \times Q$
is a finite set of transitions, 
\item $q_0 \in Q$ is the initial state, and
\item $F \subseteq Q$ is the set of final states.
\end{itemize}
If the directed graph $(Q, \{ (p,q) \mid \exists w\in \Sigma^* : (p,w,q) \in \Delta \})$
has no directed cycle, then the  finite automaton $\mathcal{A}$ is \emph{acyclic}. 
An accepting run for a word $w$ is a sequence of transitions
$(q_0, w_1, q_1), (q_1, w_2, q_2), \ldots, (q_{n-1},w_n,q_n) \in \Delta$ such that 
$w = w_1 w_2 \cdots w_n$ and  $q_n \in F$. The language
$L(A) \subseteq \Sigma^*$ is the set of all words over $\Sigma$ that have an accepting run.
By splitting transitions, one can compute in logspace from a finite automaton $\mathcal{A}$ 
an automaton $\mathcal{B}$ such that $L(\mathcal{A}) = L(\mathcal{B})$ and all transitions of $\mathcal{B}$
are from $Q \times (\Sigma \cup \{\varepsilon\}) \times Q$. Moreover, $\mathcal{B}$ is acyclic if $\mathcal{A}$ is acyclic.

Let $G$ be a finitely generated group, and let $\Sigma$ be a finite group generating set for $G$.
Hence, $\Sigma \cup \Sigma^{-1}$ generates $G$ as a monoid and there is a canonical homomorphism
$h : (\Sigma \cup \Sigma^{-1})^* \to G$. For a finite automaton $\mathcal{A}$ over $\Sigma \cup \Sigma^{-1}$
and a word $x \in (\Sigma \cup \Sigma^{-1})^*$ we also write $x \in_G L(\mathcal{A})$ for
$h(x) \in h(L(\mathcal{A}))$.
The {\em acyclic rational subset membership problem for $G$} (briefly $\ARatMP(G)$) is the following computational problem:

\medskip
\noindent
Input: An acyclic finite automaton $\mathcal{A}$ over $\Sigma \cup \Sigma^{-1}$ and a word $x \in (\Sigma \cup \Sigma^{-1})^*$.\\
Question: Does $x \in_G L(\mathcal{A})$ hold?

\medskip
\noindent
Clearly, $\SSP{G}{X}$ is logspace reducible to $\ARatMP(G)$.

\begin{theorem} \label{thm-NL-UT}
For every $d \geq 1$, $\ARatMP(\mathsf{UT}_d(\mathbb{Z}))$ belongs to
{\sf NL}.
\end{theorem}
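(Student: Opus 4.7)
The plan is to nondeterministically guess an accepting run of the automaton while maintaining the running matrix product in logarithmic space, and then compare it with $h(x)$.

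First, I would apply the preprocessing from the excerpt in logspace to obtain an equivalent acyclic automaton $\mathcal{B} = (Q,\Delta,q_0,F)$ whose transitions lie in $Q \times ((\Sigma \cup \Sigma^{-1}) \cup \{\varepsilon\}) \times Q$. Since $\mathcal{B}$ is acyclic, any run visits each state at most once and therefore has length at most $|Q|$, which is polynomial in the input size $n$.

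Next I would establish the key growth bound. Fix a constant $C$ bounding the absolute values of the entries of the generator matrices $h(a)$ for $a \in \Sigma \cup \Sigma^{-1}$. Expanding a product of $m$ unitriangular matrices via
\[
 (A_1 \cdots A_m)[i,j] = \sum_{i = i_0 \leq i_1 \leq \cdots \leq i_m = j} \prod_{k=1}^{m} A_k[i_{k-1}, i_k],
\]
each contributing nondecreasing sequence has total increase $j - i \leq d-1$, and by stars and bars there are at most $\binom{m + (j-i) - 1}{j-i} = O(m^{d-1})$ such sequences. Each term is a product of at most $d-1$ factors of modulus $\leq C$ (the remaining factors being diagonal $1$s), so $|(A_1 \cdots A_m)[i,j]| \leq C^{d-1} \cdot O(m^{d-1})$, which is polynomial in $m$ for fixed $d$. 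Applied with $m \leq |Q|$ (along any run of $\mathcal{B}$) and with $m = |x|$ (for $h(x)$), every matrix appearing in the algorithm below has entries of absolute value bounded by a polynomial in $n$, hence representable in $O(\log n)$ bits.

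With this in hand, the {\sf NL} algorithm is straightforward. I would first precompute $M_x := h(x)$ entrywise in logspace, since each entry is given by the expansion above and can be evaluated deterministically by iterating through all strict-increase patterns of length at most $d-1$ and summing. Then I would initialize $q := q_0$ and $M := \Id_d$, and nondeterministically guess transitions $(q, a, q') \in \Delta$, at each step replacing $M$ by $M \cdot h(a)$ and $q$ by $q'$. The algorithm accepts iff it eventually reaches a configuration with $q \in F$ and $M = M_x$. Termination of the nondeterministic search is automatic since $\mathcal{B}$ is acyclic.

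The main obstacle is precisely this entry growth bound: without it, the running matrix $M$ could in principle blow up exponentially and overshoot the logspace budget. The unitriangular structure is what saves us, capping the number of strict increases in any contributing nondecreasing index sequence at $d - 1$ and turning the naive exponential bound $C^m$ into a polynomial $O(m^{d-1})$. Once the bound is in place, $M$ fits in $O(d^2 \log n) = O(\log n)$ bits, each update $M \mapsto M \cdot h(a)$ multiplies by a constant-size matrix and is trivially performed in logspace, and the final comparison with $M_x$ is a pointwise check.
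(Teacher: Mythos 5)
Your proposal is correct and follows essentially the same route as the paper: nondeterministically simulate a run of the (preprocessed) acyclic automaton while maintaining the partial matrix product, with correctness of the logspace bound resting on the polynomial growth of entries in products of unitriangular matrices. The only difference is cosmetic — you prove the growth bound directly via the path-expansion and stars-and-bars count (and you are more explicit about computing and comparing against $h(x)$), whereas the paper cites the bound $|M_1\cdots M_n| \leq d + (d-1)\binom{n}{d-1}d^{2(d-2)}m^{d-1}$ from the literature.
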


\begin{proof}
Let $\mathcal{A}$ be a finite automaton with $n$ states, whose transitions are labelled with generator matrices of 
$\mathsf{UT}_d(\mathbb{Z})$ or the identity matrix.
We nondeterministically guess a path of length at most $n$ from the initial state of  $\mathcal{A}$ to a final state of 
$\mathcal{A}$ and thereby multiply the matrices along the path. We only store the current state of 
$\mathcal{A}$, the product of the matrices seen so far, and the length of the path
travelled so far (so that after $n$ steps we can stop). The state of the automaton as well as the length of the path need
$O(\log n)$  bits. Hence,  we only have
to show that the product matrix can be stored in logarithmic space. For this, it suffices to show that
the matrix entries are bounded polynomially in $n$. Then, the binary coding of the matrix needs only $O(\log n)$
many bits (note that the matrix dimension $d$ is a constant). 
For this, we can use the following simple result (see \cite[Proposition~4.18]{Loh14} for a proof), which only holds
for unitriangular matrices: For a $(d \times d)$-matrix 
$M = (a_{i,j})_{1 \leq i,j \leq d}$ over $\mathbb{Z}$ let $|M| = \sum_{i=1}^d \sum_{j=1}^d |a_{i,j}|$.
Let $M_1, \ldots, M_n \in \mathsf{UT}_d(\mathbb{Z})$, $n \geq 2d$, and let
$m = \max \{ |M_i| \mid 1 \leq i \leq n \}$.
For the product of these matrices we have
$$
|M_1 M_2 \cdots M_n| \leq d + (d-1) \binom{n}{d-1} d^{2(d-2)} m^{d-1}.
$$
In our situation, the matrices $M_i$ are from a fixed set (generators and the identity matrix). Hence, $m$ and also $d$
are constants. Hence, the above bound is polynomial in $n$, which means that every entry of the product 
$M_1 M_2 \cdots M_n$ can be stored with $O(\log n)$  bits.
\end{proof}

\begin{theorem} \label{thm-ARatMP-finite-index}
Let $H$ be a finite index subgroup of the f.g. group $G$ (hence, $H$ is f.g. too).
Then $\ARatMP(G)$ is logspace-reducible to $\ARatMP(H)$.
\end{theorem}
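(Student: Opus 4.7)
The plan is to adapt the classical Schreier coset construction to turn an instance of $\ARatMP(G)$ into an equivalent instance of $\ARatMP(H)$. Since $[G:H] < \infty$ is part of the problem specification (not of the input), I would hardcode into the reduction a right transversal $T \subseteq G$ for $H$ with $1 \in T$. For every $t \in T$ and $a \in \Sigma \cup \Sigma^{-1}$ write the unique factorization $t \cdot a = \tau(t,a) \cdot \rho(t,a)$ with $\rho(t,a) \in T$ and $\tau(t,a) \in H$. By the Reidemeister--Schreier theorem, $\Sigma_H = \{\tau(t,a) : t \in T,\, a \in \Sigma \cup \Sigma^{-1}\}$ is a finite generating set of $H$, and I fix it as the generating set used for $\ARatMP(H)$. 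All of $T$, $\tau$, $\rho$, $\Sigma_H$ are constants of the reduction.

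Given an input $(\mathcal{A}, x)$ with $\mathcal{A} = (Q, \Delta, q_0, F)$, I first preprocess $\mathcal{A}$ in logspace so that each transition is labeled by a single element of $\Sigma \cup \Sigma^{-1} \cup \{\varepsilon\}$, as permitted by the remark in the excerpt. Then I build an automaton $\mathcal{B}$ with state set $Q \times T$ and initial state $(q_0, 1)$: for every $(q, a, q') \in \Delta$ with $a \in \Sigma \cup \Sigma^{-1}$ and every $t \in T$, add a transition $(q, t) \to (q', \rho(t,a))$ labeled by $\tau(t,a) \in \Sigma_H$; for every $\varepsilon$-transition $(q, \varepsilon, q') \in \Delta$ and every $t \in T$, add an $\varepsilon$-transition $(q, t) \to (q', t)$. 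In parallel, I scan $x = a_1 \cdots a_n$ once left to right, maintaining only the current coset representative $t_i = \rho(t_{i-1}, a_i)$ (starting from $t_0 = 1$) in $O(1)$ workspace, and emitting the word $h_x = \tau(t_0, a_1)\,\tau(t_1, a_2) \cdots \tau(t_{n-1}, a_n)$ over $\Sigma_H$. After the last letter of $x$ is processed, I declare the final state set of $\mathcal{B}$ to be $F \times \{t_n\}$, and the reduction outputs $(\mathcal{B}, h_x)$.

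The correctness reduces to a telescoping identity: for any word $w = b_1 \cdots b_m$, iterating $t \cdot b = \tau(t,b) \cdot \rho(t,b)$ yields a unique factorization $w =_G h(w) \cdot t(w)$ with $h(w) \in H$ and $t(w) \in T$, and the product of the $\Sigma_H$-labels along the run of $w$ through $\mathcal{B}$ from $(q_0, 1)$ is exactly $h(w)$, ending in a state of the form $(q_m, t(w))$. Hence ``$x \in_G L(\mathcal{A})$'', i.e.\ the existence of $w \in L(\mathcal{A})$ with $w =_G x$, is equivalent by uniqueness of the factorization to the existence of $w \in L(\mathcal{A})$ with $h(w) =_H h(x)$ and $t(w) = t(x) = t_n$, which in turn is equivalent to $h_x \in_H L(\mathcal{B})$. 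Acyclicity of $\mathcal{B}$ is inherited from $\mathcal{A}$, since any directed closed walk in $\mathcal{B}$ projects, via the first coordinate, onto a directed closed walk in $\mathcal{A}$. Finally, $\mathcal{B}$ has $|Q| \cdot |T|$ states and at most $|\Delta| \cdot |T|$ transitions, so producing $\mathcal{B}$ and $h_x$ is easily done in logspace, $|T|$ being a constant.

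I do not expect a serious obstacle; the only thing to be careful about is keeping the left/right conventions consistent (on which side of $t$ the $H$-factor $\tau$ is written, and whether the transversal is chosen on the left or on the right) and handling the $\varepsilon$-transitions uniformly. The finite-index hypothesis is used in two essential ways: to know that $H$ is finitely generated (so that $\ARatMP(H)$ makes sense over a finite $\Sigma_H$), and to guarantee that the Schreier tables $\tau$ and $\rho$ are finite objects that can be baked into the reduction.
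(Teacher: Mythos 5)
Your proposal is correct and follows essentially the same route as the paper: the same coset-automaton construction with state set $Q\times T$, the same logspace left-to-right rewriting of $x$ into an $H$-word plus a coset representative, and the same choice of final states $F\times\{t_n\}$. The only (immaterial) difference is that you fix the Schreier generators $\tau(t,a)$ as the generating set of $H$, whereas the paper writes each $H$-factor as a word over an arbitrary fixed generating set $\Sigma$ of $H$.
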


\begin{proof}
Let $G$ and $H$ be as in the statement of the theorem. 
Let $\Gamma$ (resp., $\Sigma$) be a finite generating set for $G$ (resp., $H$). 
Let $Hg_0, Hg_1, \ldots, Hg_n$ be a list of all right cosets of $H$, where $g_0 = 1$.  
 
Let $\mathcal{A} = (Q, \Delta, q_0, F)$ be an acyclic finite automaton over the alphabet
$\Gamma \cup \Gamma^{-1}$ and let $x \in (\Gamma \cup \Gamma^{-1})^*$. We can assume that
$\Delta \subseteq Q \times (\Gamma \cup \Gamma^{-1}  \cup \{\varepsilon\}) \times Q$.
Assume that
$x = y g_s$ in $G$, where $y \in (\Sigma \cup \Sigma^{-1})^*$. We can compute the word $y$ 
and the coset representative $g_s$ in logspace as follows:
Let $x = a_1 a_2 \cdots a_m$. We store an index $i \in \{0,\ldots, n\}$,
which is initially set to $0$. Then, for $1 \leq j \leq m$ we do the following:
If $g_i a_j = w g_k$ for $w \in (\Sigma \cup \Sigma^{-1})^*$, then we append 
the word $w$ at the output tape and we set $i := k$. At the end, the word 
$y$ is written on the output tape and the final index $i$ is $s$ such that 
$x = y g_s$.

We now construct a new acyclic automaton $\mathcal{B}$ over the alphabet
$\Sigma \cup \Sigma^{-1}$ as follows:
\begin{itemize}
\item The state set is $Q \times \{ g_0, g_1 \ldots, g_n \}$.
\item Assume that $(p, a, q) \in \Delta$ is a transition of  $\mathcal{A}$ ($a \in  \Gamma \cup \Gamma^{-1} \cup \{\varepsilon\}$) and let $i \in \{0,1,\ldots,n\}$.
Assume that $g_i a = w g_j$ in $G$, where $w \in (\Sigma \cup \Sigma^{-1})^*$.
Then, we add the transition $( \langle p, g_i \rangle, w, \langle q, g_j \rangle)$ to $\mathcal{B}$.
\item The initial state of $\mathcal{B}$ is $\langle q_0, g_0 \rangle$.
\item The set of final states of $\mathcal{B}$ is  $F \times \{ g_s \}$.
\end{itemize}
From the construction, we get $x \in_G L(\mathcal{A})$ if and only if 
$y \in_H  L(\mathcal{B})$.
\end{proof}

\begin{theorem} \label{thm-subset-sum-nilpotent}
Let $G$ be finitely generated virtually nilpotent. Then,  the problem $\ARatMP(G)$ is
{\sf NL}-complete.
\end{theorem}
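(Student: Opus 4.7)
The strategy is to combine the three preceding theorems to obtain the $\mathsf{NL}$ upper bound, and to observe $\mathsf{NL}$-hardness via a routine reduction from directed acyclic graph reachability.

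For the upper bound, starting from a finitely generated virtually nilpotent $G$, I would pick a nilpotent subgroup $N$ of finite index in $G$ (f.g.\ because $G$ is), and then apply \Cref{thm-nilpotent-subgroup} to $N$ to obtain a torsion-free f.g.\ nilpotent subgroup $H$ of finite index in $N$, hence of finite index in $G$. \Cref{thm-embed-nilpotent} then yields some $d \geq 1$ and an embedding $\varphi \colon H \hookrightarrow \UT_d(\mathbb{Z})$. Because $G$ -- and with it $H$, $d$, $\varphi$, and a chosen finite generating set of $H$ -- is a fixed parameter of the problem rather than part of the input, the finitely many matrices $\varphi(a)$ are fixed constants and each can be coded as a constant-length word over a fixed generating set of $\UT_d(\mathbb{Z})$. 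Relabelling each transition of an input automaton over the generators of $H$ by the corresponding constant-length path therefore constitutes a logspace reduction $\ARatMP(H) \to \ARatMP(\UT_d(\mathbb{Z}))$. Composing this with the logspace reduction $\ARatMP(G) \to \ARatMP(H)$ provided by \Cref{thm-ARatMP-finite-index} and with the membership $\ARatMP(\UT_d(\mathbb{Z})) \in \mathsf{NL}$ from \Cref{thm-NL-UT}, and using closure of $\mathsf{NL}$ under logspace reductions, places $\ARatMP(G)$ in $\mathsf{NL}$.

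For $\mathsf{NL}$-hardness, I would reduce from $s$-$t$-reachability in directed acyclic graphs, the canonical $\mathsf{NL}$-complete problem. Given a DAG with distinguished vertices $s$ and $t$, regard it as an acyclic finite automaton $\mathcal{A}$ with initial state $s$, unique final state $t$, and every transition labelled $\varepsilon$ (or, if one prefers to avoid $\varepsilon$-transitions, labelled $a a^{-1}$ for a fixed $a \in \Sigma$). Then $t$ is reachable from $s$ in the DAG iff $\varepsilon \in_G L(\mathcal{A})$, giving the desired logspace reduction to $\ARatMP(G)$.

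The main (modest) obstacle is to verify in detail that each of the three composed reductions is genuinely logspace. The coset bookkeeping is already handled inside \Cref{thm-ARatMP-finite-index}, and the embedding step is routine because the coset representatives of $H$ in $G$, the matrices $\varphi(a)$ for generators $a$ of $H$, and the integer $d$ are all constants depending only on the fixed group $G$ and can be hard-coded into the reduction; once this is noted, the theorem follows from the three previously established results.
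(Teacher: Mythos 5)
Your proposal is correct and follows essentially the same route as the paper: the upper bound via Theorems~\ref{thm-nilpotent-subgroup} and~\ref{thm-embed-nilpotent} to pass to a finite-index subgroup of $\UT_d(\mathbb{Z})$, combined with Theorems~\ref{thm-ARatMP-finite-index} and~\ref{thm-NL-UT}, and hardness via acyclic graph reachability. You merely spell out details the paper leaves implicit (the recoding of generators under the embedding, which the paper absorbs into its ``w.l.o.g.\ $H \leq \UT_d(\mathbb{Z})$'', and the explicit DAG-to-automaton reduction), and these are handled correctly.
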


\begin{proof}
Hardness for {\sf NL} follows immediately from the {\sf NL}-hardness of the graph reachability problem 
for acyclic directed graphs. For the membership in {\sf NL} 
let $G$ be finitely generated virtually nilpotent.  By Theorem~\ref{thm-nilpotent-subgroup} and
\ref{thm-embed-nilpotent}, $G$ has a finite index subgroup $H$ such that $H$
is isomorphic to a subgroup of $\mathsf{UT}_d(\mathbb{Z})$. W.l.o.g we assume that $H$ is a subgroup of 
$\mathsf{UT}_d(\mathbb{Z})$. Membership in {\sf NL} follows from 
Theorem~\ref{thm-NL-UT} and Theorem~\ref{thm-ARatMP-finite-index}.
\end{proof}
By Theorem~\ref{thm-subset-sum-nilpotent}, the subset sum problem for a 
finitely generated virtually nilpotent belongs to {\sf NL}. It is open, whether this
upper bound can be further improved. In particular, it is open whether the subset
sum problem for the Heisenberg group $H_3(\Z)$ can be solved in deterministic
logspace. Recall from the introduction that subset sum for $\Z$ (and unary encoded numbers) belongs to 
$\mathsf{DLOGTIME}$-uniform $\mathsf{TC}^0$ which is a subclass of 
deterministic logspace. This result generalizes easily to any f.g.~abelian group.

\section{Subset sum in polycyclic groups}

We show in this section that there exists a polycyclic group with an $\mathsf{NP}$-complete
subset sum problem, which is in sharp contrast to nilpotent groups (assuming $\mathsf{NL} \neq \mathsf{NP}$).
Let us start with a specific example of a polycyclic group. 
Consider the two matrices  \label{g_a-and-h}
\begin{eqnarray*}
g_a =\left( \begin{array}{cc} a & 0 \\ 0 & 1 \end{array} \right) \text{ and }  h = \left( \begin{array}{cc} 1 & 1 \\ 0 & 1 \end{array} \right),
\end{eqnarray*}
where $a \in \mathbb{R}$, $a \geq 2$. Let $G_a = \langle g_a, h \rangle \leq \mathsf{GL}_2(\mathbb{R})$.
Let us remark that, for instance, the group $G_2$ 
is not polycyclic, see e.g. \cite[p.~56]{Wehr73}. On the other hand, we have:

\begin{proposition}[c.f.~\cite{KonigL15}] \label{prop-a-polycyclic-group}
The group $G_{1+\sqrt{2}}$ is polycyclic.
\end{proposition}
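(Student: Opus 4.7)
The plan is to realize $G := G_{1+\sqrt{2}}$ as an iterated extension $1 \to \mathbb{Z}^2 \to G \to \mathbb{Z} \to 1$, which yields a subnormal series with cyclic quotients and hence polycyclicity. Write $a := 1+\sqrt{2}$. The pivotal arithmetic fact is that $a$ is a unit in the ring of integers of $\mathbb{Q}(\sqrt{2})$: one computes $a^{-1} = \sqrt{2}-1 = a-2 \in \mathbb{Z}[a]$, so $\mathbb{Z}[a, a^{-1}] = \mathbb{Z}[a]$, and since $a$ satisfies $a^2 = 2a + 1$, this ring is a free abelian group of rank $2$ with $\mathbb{Z}$-basis $\{1, a\}$. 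This should be contrasted with $a = 2$, where $\mathbb{Z}[2,1/2] = \mathbb{Z}[1/2]$ is not finitely generated as an abelian group, explaining why $G_2$ fails to be polycyclic.

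Next, a short induction on word length in the generators $g_a^{\pm 1}, h^{\pm 1}$ shows that every element of $G$ is a matrix of the form $\begin{pmatrix} a^k & y \\ 0 & 1 \end{pmatrix}$ with $k \in \mathbb{Z}$ and $y \in \mathbb{Z}[a]$; closure of this family under products and inverses uses only that $\mathbb{Z}[a]$ is a ring closed under multiplication by $a^{\pm 1}$. I would then define $\phi : G \to \mathbb{Z}$ by $\phi(M) = k$ where $M[1,1] = a^k$; this is well-defined because $a > 1$ forces $a^k$ to determine $k$ uniquely, and it is clearly a homomorphism. Its kernel $N$ consists of the matrices of the above form with $k = 0$, and the assignment $M \mapsto y$ realizes an injective homomorphism of $N$ into the additive group of $\mathbb{Z}[a]$.

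I would then check that $N$ in fact maps \emph{onto} $\mathbb{Z}[a]$. The matrix $h$ corresponds to $y = 1$, and $g_a^j h g_a^{-j}$ corresponds to $y = a^j$; thus the image contains both $1$ and $a$, which already generate $\mathbb{Z}[a] \cong \mathbb{Z}^2$ additively. Combined with $G/N \cong \mathbb{Z}$ via $\phi$, this produces the subnormal series $1 \lhd \langle h \rangle \lhd N \lhd G$ whose successive quotients are all cyclic (the middle quotient is $\mathbb{Z}[a]/\mathbb{Z}\cdot 1 \cong \mathbb{Z}$, since $\{1,a\}$ is a basis), so $G$ is polycyclic.

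The main obstacle is really the first step: identifying the ring $\mathbb{Z}[a,a^{-1}]$ for the specific choice $a = 1+\sqrt{2}$ and recognising that it is already finitely generated as an abelian group, which is exactly the feature that fails for $a = 2$. Once this algebraic observation is in hand, the remainder is a routine semidirect-product calculation inside $\mathsf{GL}_2(\mathbb{R})$.
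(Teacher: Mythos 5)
Your proof is correct and complete. The paper itself offers no proof of this proposition --- it is only cited from \cite{KonigL15} --- and your argument, realizing $G_{1+\sqrt{2}}$ as an extension of $\mathbb{Z}$ by $\mathbb{Z}[\sqrt{2}]\cong\mathbb{Z}^2$ and pinpointing that $1+\sqrt{2}$ is a unit of $\mathbb{Z}[\sqrt{2}]$ (so that $\mathbb{Z}[a,a^{-1}]$ is finitely generated as an abelian group, unlike $\mathbb{Z}[1/2]$ for $a=2$), is exactly the standard one underlying that citation.
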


\begin{theorem}
$\SSP{G_{1+\sqrt{2}}}{X}$ is {\sf NP}-complete.
\end{theorem}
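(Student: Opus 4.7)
The plan is to establish NP-membership and NP-hardness separately.

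For NP-membership, I would guess $\varepsilon_1,\ldots,\varepsilon_k\in\{0,1\}$ nondeterministically and verify $g_1^{\varepsilon_1}\cdots g_k^{\varepsilon_k}=g$ by carrying out the matrix product in $\mathsf{GL}_2(\mathbb{Z}[\sqrt{2}])$, representing each $p+q\sqrt{2}\in\mathbb{Z}[\sqrt{2}]$ by the pair $(p,q)$ in binary. Since the input matrices have entries of polynomial bit-length and sequential matrix multiplication over $\mathbb{Z}[\sqrt{2}]$ causes only polynomial growth (the entries of a product of $k$ factors with entry bit-length $s$ have bit-length $O(ks)$), the verification runs in polynomial time.

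For NP-hardness the idea is to reduce from the classical binary-encoded subset sum problem over $\mathbb{Z}$. The essential feature of $G_{1+\sqrt{2}}$ to exploit is that conjugation by $g_a$ realises the multiplication-by-$a$ automorphism on the abelian unipotent subgroup $U=\bigl\{\bigl(\begin{smallmatrix}1 & x\\ 0 & 1\end{smallmatrix}\bigr):x\in\mathbb{Z}[\sqrt{2}]\bigr\}$, and $a=1+\sqrt{2}$ is a unit of infinite order. Writing $a^j=Q_j+P_j\sqrt{2}$ with Pell-type integers $Q_j,P_j$ growing exponentially in $j$, and using the identity $a^j+a^{-j}=2Q_j\in\mathbb{Z}$ for even $j$, the group word $g_a^j\,h\,g_a^{-j}\cdot g_a^{-j}\,h\,g_a^{j}$ has length $O(j)$ and evaluates to the translation $\begin{pmatrix}1 & 2Q_j\\ 0 & 1\end{pmatrix}$, whose upper-right entry is an integer of $\Theta(j)$ bits. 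This exponential compression is what allows binary-coded integers to be encoded by polynomial-length group words.

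Given a binary subset sum instance $(n_1,\ldots,n_k;t)$, I would represent each $n_i$ and $t$ as an element of $U$ with the corresponding upper-right entry, by writing $n_i$ as a short integer combination of the $2Q_j$ (for instance via a Zeckendorf-style greedy decomposition using the Pell numbers) and assembling the associated shortcut words. Because $U$ is abelian and its elements are determined by their upper-right entries, the subset sum equation in $G_{1+\sqrt{2}}$ then mirrors the integer equation. The main obstacle I expect is keeping everything inside $U$: the shortcuts naturally produce elements of $\mathbb{Z}[\sqrt{2}]$, so one must prevent a spurious subset from exploiting the $\sqrt{2}$-coordinate to reach the target. This can be addressed either by constructing the $\gamma_i$ so that their $\sqrt{2}$-coordinates cancel in every admissible partial product, or by adding auxiliary generators that force the $\sqrt{2}$-coordinate of any valid solution to vanish; both amount to routine but delicate bookkeeping in the two-dimensional $\mathbb{Z}$-module $\mathbb{Z}[\sqrt{2}]$.
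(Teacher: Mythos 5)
Your proposal is correct in its essential ideas but takes a genuinely different route to hardness than the paper. The paper does not pass through binary subset sum over $\Z$ at all: it redoes the classical 3SAT-to-subset-sum reduction directly inside $G_{1+\sqrt{2}}$, encoding each relevant quantity as $\sum_i x_i\alpha^{3i}$ with digits $x_i\in\{0,\ldots,5\}$ and realising the translation by $\alpha^i$ via the length-$O(i)$ word $g_\alpha^i h g_\alpha^{-i}$; the only arithmetic fact needed is a uniqueness-of-digits claim, proved by a geometric-series estimate using $\alpha>2$ (this replaces the ``no carries'' argument of the integer version). You instead exploit the trace identity $\alpha^j+\alpha^{-j}=2Q_j\in\Z$ for even $j$ to build length-$O(j)$ words evaluating to integer translations of $\Theta(j)$ bits, and then embed Karp's binary subset sum. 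Your route is more modular (it quotes a known NP-hard problem rather than redoing a reduction) and it isolates a cleaner structural fact, namely that $\langle h\rangle\cong\Z$ is distorted enough in $G_{1+\sqrt 2}$ to compress binary integers into polynomial-length words; the price is the extra number theory (Pell recurrences, a greedy base-$2Q_j$ decomposition with bounded digits, and a parity fix since every $2Q_j$ is even --- double the instance or spend copies of $h$ on the units digit). One point where you overestimate the difficulty: the ``spurious $\sqrt2$-coordinate'' obstacle you flag dissolves entirely, because each of your shortcut words individually evaluates to an integer translation, so every subset product lies in the copy of $\Z$ inside the abelian group $U\cong\Z[\sqrt2]$ and the group equation is literally the integer equation; no auxiliary generators or cancellation scheme is needed. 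Your NP-membership argument (guess and multiply in $\mathbb{Z}[\sqrt2]$ with polynomially bounded entry growth) is correct; the paper leaves membership implicit.
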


\begin{proof}
Let $\alpha = 1+\sqrt{2}$.
We follow the standard proof for the {\sf NP}-completeness of subset sum for binary encoded integers.
But we will work with real numbers of the form
$$
x = \sum_{i=0}^{n} x_i  \cdot \alpha^{3i},
$$
where the $x_i$ are natural numbers with $0 \leq x_i \leq 5$. The numbers $x_i$
are uniquely determined by $x$ in the following sense:

\medskip
\noindent
{\em Claim 1:} If 
\begin{equation} \label{eq-alpha}
\sum_{i=0}^{n} x_i  \cdot \alpha^{3i} =  \sum_{i=0}^{m} y_i  \cdot \alpha^{3i} 
\end{equation}
with $x_0, \ldots x_{n}, y_0, \ldots, y_{m}\in\{0,1,\ldots,5\}$ and $x_n \neq 0 \neq y_m$,
then $n=m$ and $x_i = y_i$ for all $0 \leq i \leq n$.

\medskip
\noindent
{\em Proof of Claim 1.} Assume that the conclusion of the claim fails.
Then, by canceling $\alpha$-powers with highest exponent, 
we obtain from \ref{eq-alpha} an identity of the form
$$
\sum_{i=0}^{n} x_i  \cdot \alpha^{3i} =  \sum_{i=0}^{m} y_i  \cdot \alpha^{3i} 
$$
where $n > m$, $x_0, \ldots x_{n}, y_0, \ldots, y_{m}\in\{0,1,\ldots,5\}$ and $x_n \neq 0$.
In order to lead this to a contradiction, it suffices to show
$$
\alpha^{3n} > \sum_{i=0}^{n-1} 5  \cdot \alpha^{3i}.
$$
Indeed, we have
$$
\sum_{i=0}^{n-1} 5  \cdot \alpha^{3i} < \sum_{i=0}^{n-1} (\alpha^{3i} + \alpha^{3i+1} + \alpha^{3i+2}) = \sum_{i=0}^{3n-1} \alpha^i = 
\frac{\alpha^{3n} - 1}{\alpha-1} < \alpha^{3n} .
$$
Let us now take a 3CNF-formula $C = \bigwedge_{i=1}^m C_i$, where $C_i = (z_{i,1} \vee z_{i,2} \vee z_{i,3})$.
Every $z_{i,j}$ is a literal, i.e., a boolean variable or a negated boolean variable. Let $x_1, \ldots, x_n$ be the boolean variables
appearing in $C$.

We now define numbers $u_1, \ldots, u_{2n+2m}$, and $t$ as follows, where
$1 \leq i \leq n$ and $1 \leq j \leq m$:
\begin{eqnarray*}
 u_{2i-1}                 & = &  \alpha^{3i-3}  +  \sum_{x_i \in C_k}  \alpha^{3n + 3k - 3} \\
 u_{2i} & = &  \alpha^{3i-3}  +  \sum_{\overline{x}_i \in C_k}  \alpha^{3n + 3k - 3} \\
 u_{2n+2j-1} & = & u_{2n+2j}                =  \alpha^{3n + 3j - 3} \\
 t                   & = & \sum_{i=1}^n \alpha^{3i-3}   +   \sum_{k=1}^m 3 \cdot \alpha^{3n+3k-3} 
\end{eqnarray*}
{\em Claim 2:}
$C$ is satisfiable if and only if there exists a subset $I \subseteq \{1, \ldots, 2n+2m\}$ such that
$\sum_{k \in I} u_k = t$.

\medskip
\noindent
{\em Proof of Claim 2.}  First assume that $C$ is satisfiable, and let 
$\varphi : \{x_1, \ldots, x_n \} \to \{0,1\}$ be a satisfying assignment for $C$.
We set $\varphi(\overline{x}_i) = 1 - \varphi(x_i)$.
For every clause $C_j = (z_{j,1} \vee z_{j,2} \vee z_{j,3})$ 
let $\gamma_j = |\{ k \in \{1,2,3\} \mid \varphi(z_{j,k})=1 \}|$ be the number of literals
in $C_j$ that are true under $\varphi$. Thus, we have $1 \leq \gamma_j \leq 3$.

We define the set $I$ as follows, where
$1 \leq i \leq n$ and $1 \leq j \leq m$:
\begin{itemize}
\item  $2i-1 \in I$ iff $\varphi(x_i) = 1$
\item $2i \in I$ iff $\varphi(x_i) = 0$
\item  If $\gamma_j = 3$, then $2n+2j-1 \not\in I$ and $2n+2j \not\in I$.
\item  If $\gamma_j = 2$, then $2n+2j-1 \in I$ and $2n+2j \not\in I$.
\item  If $\gamma_j = 1$, then $2n+2j-1  \in I$ and $2n+2j \in I$.
\end{itemize}
With this set $I$ we have indeed $\sum_{k \in I} u_k = t$.

For the other direction, let  $I \subseteq \{1, \ldots, 2n+2m\}$ such that
$\sum_{k \in I} u_k = t$. 
Note that in the sum $\sum_{k \in I} u_k$ no power $\alpha^{3k}$ can appear more than 5 times
(a power $\alpha^{3n+3j-3}$ with $1 \leq j \leq m$ can appear at most 5 times, since it appears in 3 of the numbers
$u_1, \ldots, u_{2n}$ and in 2 of the numbers $u_{2n+1}, \ldots, u_{2n+2m}$). This allows to use Claim 1.
 A comparision of $t$ and $\sum_{k \in I} u_k$ shows that 
 either $2i-1 \in I$ or $2i \in I$. We define the assignment $\varphi : \{x_1, \ldots, x_n \} \to \{0,1\}$ as follows:
\begin{itemize}
\item  $\varphi(x_i) = 1$ iff $2i-1 \in I$
\item $\varphi(x_i) = 0$ iff $2i \in I$ 
\end{itemize} 
As above, let  $\gamma_j$ be  the number of literals
in $C_j$ that are true under $\varphi$. 
Moreover, let $\delta_j = | I \cap \{ 2n+2j-1, 2n+2j\}|$ for $1 \leq j \leq m$.
We get
$$
\sum_{k \in I} u_k = \sum_{i=1}^n \alpha^{3i-3}   +    \sum_{j=1}^m (\gamma_j + \delta_j) \cdot \alpha^{3n+3j-3} = t = \sum_{i=1}^n \alpha^{3i-3}   +   \sum_{j=1}^m 3 \cdot \alpha^{3n+3j-3} .
$$
Since $\delta_j \in \{0,1,2\}$ we must have $\gamma_j \geq 1$ for all 
$1 \leq j \leq m$. This shows that $\varphi$ satisfies $C$.

We now map each of the numbers  $u_1, \ldots, u_{2n+2m},t$ to a word over the generators $g_\alpha, h$ (and their inverses) of 
the polycyclic group $G_\alpha$.
First, for $i \geq 0$ let  us define 
$$
w_i = g_\alpha^i h  g_\alpha^{-i}
$$
In the group $G_\alpha$ we have
$$
w_i = \left( \begin{array}{cc} 1 & \alpha^i \\ 0 & 1 \end{array} \right) 
$$
Finally, take a number 
$Y = \sum_{i=0}^n y_i \cdot \alpha^i$.
We define the word 
$$
w_Y = \prod_{i=0}^n w_i^{y_i} .
$$
In the group $G_\alpha$ we have
$$
w_Y = \left( \begin{array}{cc} 1 & Y \\ 0 & 1 \end{array} \right) .
$$
The words $w_{u_1}, \ldots, w_{u_{2n+2m}}, w_t$ can be computed in polynomial time (even in logspace)
from the 3CNF-formula $C$. Moreover, the construction implies that 
 $C$ is satisfiable iff there exists a subset $I \subseteq \{1, \ldots, 2n+2m\}$ such that
$\sum_{k \in I} u_k = t$ iff
there are $\varepsilon_1, \ldots, \varepsilon_{2n+2m} \in \{0,1\}$ such that 
$w_{u_1}^{\varepsilon_1} \cdots w_{u_{2n+2m}}^{\varepsilon_{2n+2m}} = w_t$ in the group $G_\alpha$.
\end{proof}

\section{Knapsack problems in nilpotent groups}

The goal of this section is to prove that the knapsack problem is undecidable for a 
direct product of sufficiently many copies of $H_3(\mathbb{Z})$, which is 
nilpotent of class two.

\subsection{Exponential expressions}

Let $\mathcal{X}$ be a countably infinite set of variables.
An \emph{exponential expression} $E$  over a group $G$  is a formal product of the form 
$$E=g_1^{x_1} g_2^{x_2} \cdots g_l^{x_l}$$ 
with $x_1,\ldots,x_l \in \mathcal{X}$ and $g_1,\ldots, g_l \in  G$. 
We do not assume that $x_i \neq x_j$ for $i \neq j$.
The group elements $g_1, \ldots, g_l$ will be also called the 
\emph{base elements} of $E$. The \emph{length} of $E$ is $l$.
Let $\Var(E) = \{ x_1, \ldots, x_n\}$ be the set of variables that appear in $E$.
For a finite set $X$ with $\Var(E) \subseteq X \subseteq \mathcal{X}$ and
$g \in G$, the set of \emph{$X$-solutions} of the equation $E=g$ is the set of mappings
$$
S_X(E=g) = \{ \nu : X \to \mathbb{Z} \mid g_1^{\nu(x_1)} g_2^{\nu(x_2)} \cdots g_l^{\nu(x_l)} = g \text{ in } G \}.
$$ 
Note that not every variable from $X$ has to appear as an exponent in $E$.
We moreover set $S(E=g) = S_{\Var(E)}(E=g)$.

For every $1 \leq i \leq n$
consider an exponential expression $E_i$ over a group $G_i$.
Then we can define the exponential expression 
$E = \prod_{i=1}^n E_i$ over the group $G = \prod_{i=1}^n G_i$.
It is defined by replacing in $E_i$ every occurrence of a base element $g \in G_i$ by the corresponding element
$$
(\underbrace{1, \ldots, 1}_{\text{$i-1$ many}}, g, \underbrace{1, \ldots, 1}_{\text{$n-i$ many}}) \in G
$$
and taking the concatenation of the resulting exponential expressions.
With this definition, the following lemma is obvious.

\begin{lemma} \label{lemma-direct-product}
For $1 \leq i \leq n$
let $E_i$ be an exponential expression over a group $G_i$.
Let $g_i \in G_i$ for $1 \leq i \leq n$. 
Let $X = \bigcup_{i=1}^n \Var(E_i)$.
Then for the exponential expression $E =  \prod_{i=1}^n E_i$ and the
element $g = (g_1, \ldots, g_n) \in \prod_{i=1}^n G_i$ we have:
$$
S_X(E=g) = \bigcap_{i=1}^n S_X(E_i = g_i).
$$
\end{lemma}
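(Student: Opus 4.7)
The plan is to unpack the definitions and observe that the construction is essentially coordinatewise. The core point is that each canonical embedding $\iota_i : G_i \hookrightarrow G$ sending $g$ to the tuple with $g$ in position $i$ and $1$ elsewhere is a group homomorphism, and that elements from different factors commute in $G$. Hence if we fix any assignment $\nu : X \to \mathbb{Z}$ and evaluate the product $E = \prod_{i=1}^n E_i$ under $\nu$, the result is the tuple whose $i$-th coordinate is precisely the evaluation of $E_i$ under $\nu$.

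First I would write $E_i = g_{i,1}^{x_{i,1}} \cdots g_{i,l_i}^{x_{i,l_i}}$ and note that, by the definition preceding the lemma, $E$ is obtained by concatenating the expressions $\iota_i(g_{i,1})^{x_{i,1}} \cdots \iota_i(g_{i,l_i})^{x_{i,l_i}}$ for $i = 1, \ldots, n$. Under an assignment $\nu : X \to \mathbb{Z}$, evaluating this concatenated expression in $G$ yields
\[
\prod_{i=1}^n \iota_i\!\left(g_{i,1}^{\nu(x_{i,1})} \cdots g_{i,l_i}^{\nu(x_{i,l_i})}\right),
\]
because each $\iota_i$ is a homomorphism. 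Since the $\iota_i$ have disjoint coordinate supports, this product is exactly the tuple $(v_1(\nu), \ldots, v_n(\nu))$, where $v_i(\nu) = g_{i,1}^{\nu(x_{i,1})} \cdots g_{i,l_i}^{\nu(x_{i,l_i})} \in G_i$ is the evaluation of $E_i$ under $\nu$.

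Now the equation $E = g$ in $G = \prod_{i=1}^n G_i$ unfolds componentwise: it holds under $\nu$ if and only if $v_i(\nu) = g_i$ in $G_i$ for every $i$, i.e., if and only if $\nu \in S_X(E_i = g_i)$ for every $i$. Taking the intersection over $i$ gives the claimed equality $S_X(E = g) = \bigcap_{i=1}^n S_X(E_i = g_i)$.

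There is no real obstacle here; the only point that merits attention is that variables may be shared between different $E_i$, so $\Var(E_i)$ can be a proper subset of $X = \bigcup_i \Var(E_i)$. This is exactly why the lemma is stated with respect to the common domain $X$ rather than $\Var(E_i)$: a single assignment $\nu : X \to \mathbb{Z}$ simultaneously restricts to an assignment on each $\Var(E_i)$, and the above componentwise evaluation argument goes through verbatim regardless of how the variables are reused across the factors.
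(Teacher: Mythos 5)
Your proof is correct and is exactly the routine componentwise verification that the paper has in mind --- the paper offers no proof at all, simply declaring the lemma ``obvious'' after the definition of $\prod_{i=1}^n E_i$. Your unpacking via the embeddings $\iota_i$ and the componentwise reading of $E=g$ is the right (and essentially only) way to make that explicit; note only that since the concatenation keeps the factors of each $E_i$ contiguous, you do not even need to invoke commutativity between different coordinates.
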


\begin{proposition} \label{prop-exponential-expression}
There are fixed constants $d,e\in \mathbb{N}$ and a fixed exponential expression $E$ over $G = H_3(\mathbb{Z})^d \times \mathbb{Z}^e$  such that the following problem is undecidable: 

\medskip
\noindent
Input: A element $g \in G$.\\
Question: Does $S(E=g) \neq \emptyset$ hold?
\end{proposition}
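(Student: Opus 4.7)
The plan is to reduce Hilbert's tenth problem to emptiness of $S(E=g)$, exploiting that the top-right entry of a product in $H_3(\mathbb{Z})$ depends quadratically on the exponents while the other entries depend only linearly. By Matiyasevich's theorem, fix a polynomial $P(a_1,\ldots,a_k,y_1,\ldots,y_n) \in \mathbb{Z}[\vec{a},\vec{y}]$ such that deciding, given $\vec{a} \in \mathbb{Z}^k$, whether there exists $\vec{y} \in \mathbb{Z}^n$ with $P(\vec{a},\vec{y}) = 0$, is undecidable. First I would rewrite $P$ as a straight-line program of fixed finite length: introduce auxiliary variables $z_1,\ldots,z_M$, each defined by either $z_i = z_j + z_l$ or $z_i = z_j \cdot z_l$ (where the arguments may be earlier $z$'s, parameters $a_i$, existentially quantified $y_i$'s, or integer constants), with the final relation being $P(\vec{a},\vec{y}) = 0$. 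The number $m$ of multiplication-atoms and the number $s$ of addition-atoms depend only on $P$.

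Next I would encode each atomic relation by one direct factor of $G$. For a product relation $z_i = z_j \cdot z_l$, take a copy of $H_3(\mathbb{Z})$ with the projected subexpression
\[
h_1^{z_j}\, h_2^{z_l}\, (h_1^{-1})^{z_j}\, (h_2^{-1})^{z_l}\, h_3^{z_i},
\]
where $h_1,h_2$ are the standard Heisenberg generators (with $(1,2)$- and $(2,3)$-entry equal to $1$ respectively, other above-diagonal entries $0$), and $h_3$ has only its $(1,3)$-entry equal to $-1$. A direct calculation using the multiplication rule $(a,b,c)(a',b',c') = (a+a',\,b+b',\,c+c'+ab')$ in $H_3(\mathbb{Z})$ shows this product equals $\Id_3$ iff $z_j z_l = z_i$: the linear contributions to the $(1,2)$- and $(2,3)$-entries cancel through the inverse generators, and the remaining $(1,3)$-entry works out to $z_j z_l - z_i$. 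So targeting $\Id_3$ in this factor enforces exactly the desired product. For an addition relation $z_i = z_j + z_l$, use a copy of $\mathbb{Z}$ with expression $1^{z_j}\, 1^{z_l}\, (-1)^{z_i}$ and target $0$; for a parameter constraint $p_i = a_i$, use one further copy of $\mathbb{Z}$ with expression $1^{p_i}$ and target $a_i$. These parameter coordinates are the only ones of the target that will depend on the input $\vec{a}$.

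Assembling via Lemma~\ref{lemma-direct-product}, I obtain a single fixed exponential expression $E$ over $G = H_3(\mathbb{Z})^d \times \mathbb{Z}^e$ with $d = m$ and $e = s + k$. The map $\vec{a} \mapsto g(\vec{a})$, where $g(\vec{a})$ has $\Id_3$ in each $H_3$-coordinate, $0$ in each addition-coordinate, and $a_i$ in the $i$-th parameter-coordinate, reduces Hilbert's tenth problem for $P$ to the decision problem in the statement, yielding undecidability. The main technical step is the matrix identity above; everything else is bookkeeping via Lemma~\ref{lemma-direct-product}. One small subtlety to verify is that a single variable $z_j$ must be permitted to appear twice within the encoding of a product (with bases $h_1$ and $h_1^{-1}$), which is exactly what the definition of exponential expression allows since distinctness of the variable occurrences is not required.
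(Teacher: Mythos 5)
Your proposal is correct and follows essentially the same route as the paper's proof: reduce from Hilbert's tenth problem via Matiyasevich, decompose the polynomial equation into atomic multiplication/addition/constant relations, encode each multiplication in a copy of $H_3(\mathbb{Z})$ by a length-5 commutator-style expression whose $(1,3)$-entry computes $z_jz_l - z_i$, encode the linear relations in copies of $\mathbb{Z}$, and assemble with Lemma~\ref{lemma-direct-product} so that only the target $g$ depends on the input. Your matrix identity checks out (it differs from the paper's Case 1 only in the order of the two Heisenberg generators and the sign convention on the central element), so there is nothing to add.
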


\begin{proof}
Let $P(x_1,\ldots,x_n) \in \mathbb{Z}[x_1,\ldots,x_n]$ be a fixed polynomial such that the following question is undecidable:

\medskip
\noindent
Input: A number $a \in \mathbb{N}$.\\
Question: Is there a tuple $(z_1,\ldots,z_n) \in \mathbb{Z}^n$ such that $P(z_1,\ldots,z_n)=a$.

\medskip
\noindent
By Matiyasevich's proof for the unsolvability of Hilbert's 10th problem,  we know that such a polynomial exists, see \cite{Mat93} for details.
By introducing additional variables, 
we can construct from the polynomial $P(x_1, \ldots, x_n)$  a system $\mathcal{S}$ of equations of the form
$x \cdot y = z$, $x + y = z$, $x = c$ (for $c \in \mathbb{Z}$) such that the equation $P(x_1, \ldots, x_n) = a$
has a solution in $\mathbb{Z}$ if and only if the system of equations $\mathcal{S}_a := \mathcal{S} \cup \{ x_0 = a \}$ has 
a solution in $\mathbb{Z}$. Here $x_0$ is a distinguished variable of $\mathcal{S}$.
Let $X$ be the set of variables that occur in $\mathcal{S}_a$.

Take an integer $a \in \mathbb{Z}$ (the input for our reduction).
Assume that $\mathcal{S}_a$ contains $d$ many equations of the form $x \cdot y = z$
and $e$ many equations of the form $x + y = z$ or $x = c$. Enumerate all equations
as $\mathcal{E}_1, \ldots, \mathcal{E}_{d+e}$, where
$\mathcal{E}_1, \ldots, \mathcal{E}_d$ are all equations of the form $x \cdot y = z$.
Let $G_i =  H_3(\mathbb{Z})$ for $1 \leq i \leq d$ and 
$G_i = \mathbb{Z}$ for $d+1 \leq i \leq d+e$
We define for every $1 \leq i \leq d+e$ an element $g_i \in G_i$ and an exponential expression $E_i$ over
$G_i$ as follows:

\medskip
\noindent
{\em Case 1.} $\mathcal{E}_i = (x \cdot y = z)$ and thus $G_i =  H_3(\mathbb{Z})$.
Then, we set $g_i = \Id_3$ (the identity matrix) and
\begin{align*}
E_ i =  
\left( \! \!
\begin{array}{ccc}
1 & 0 & 0 \\
0 & 1 & 1 \\
0 & 0 & 1
\end{array} \! \!
\right)^{\! \!  \!   x}
\left(  \! \!
\begin{array}{ccc}
1 & 1 & 0 \\
0 & 1 & 0 \\
0 & 0 & 1
\end{array} \! \!
\right)^{\! \!   \!  y} 
 \left( \! \!
\begin{array}{rrr}
1 & 0 & 0 \\
0 & 1 & -1 \\
0 & 0 & 1
\end{array} \! \! 
\right)^{\! \!  \!  x}
\left( \! \!
\begin{array}{rrr}
1 & -1 & 0 \\
0 & 1 & 0 \\
0 & 0 & 1
\end{array} \! \!
\right)^{\! \!  \!  y}
\left( \! \!
\begin{array}{rrr}
1 & 0 & 1 \\
0 & 1 & 0 \\
0 & 0 & 1
\end{array} \! \!
\right)^{\! \!  \!  z} .
\end{align*}
One can easily check that a mapping $\nu : X \to \mathbb{Z}$ is a solution of $E_i = g_i$ 
if and only if $\nu(x) \cdot \nu(y) = \nu(z)$.

\medskip
\noindent
{\em Case 2.} $\mathcal{E}_i = (x + y = z)$ and thus $G_i =  \mathbb{Z}$. Then,
$g_i = 0$ and 
$E_i$ is (written in additive form)
$E_i = x + y - z$ (or, written multiplicatively, $E_i = a^x a^y a^{-z}$, where $a$ is a generator of $\mathbb{Z}$).
Then, a mapping $\nu : X \to \mathbb{Z}$ is a solution of $E_i = g_i$ 
if and only if $\nu(x) + \nu(y) = \nu(z)$.

\medskip
\noindent
{\em Case 3.} $\mathcal{E}_i = (x = c)$ (this includes the distinguished equation $x_0 = a$)
and thus $G_i =  \mathbb{Z}$. Then, $g_i = c$ and
$E_i =  x$ (or, written multiplicatively, $E_i = a^x$).
Then, a mapping $\nu : X \to \mathbb{Z}$ is a solution of $E_i = g_i$ 
if and only if $\nu(x) = c$.

\medskip
\noindent
Let $E = \prod_{i=1}^d E_i$ and $g = (g_1, \ldots, g_d)$.
By Lemma~\ref{lemma-direct-product},  a mapping $\nu : X \to \mathbb{Z}$ is a solution of 
$E=g$ if and only if $\nu$ is a solution of the system $\mathcal{S}_a$.
Also note that $g \in G$ depends on the input integer $a$, 
but the exponential expression $E$ only depends on the fixed polynomial
$P(x_1, \ldots, x_n)$.
\end{proof}

\begin{remark} \label{remark-commutation}
The fixed exponential expression $E$ from Proposition~\ref{prop-exponential-expression} has the following property
that will be exploited in the next section: We can write $E = E_1 E_2 \cdots E_m$ such that every $E_i$ has length at most $4$
and every base element $g$ from $E_i$ commutes with every base element $h$ from $E_j$ whenever $i \neq j$.
For this, note that the last matrix in the exponential expression from Case 1 is central in $H_3(\mathbb{Z})$.
\end{remark}

\subsection{Undecidability of knapsack for nilpotent groups of class two}

Let $E = g_1^{x_1} g_2^{x_2} \cdots g_l^{x_l}$ be an exponential expression over the f.g. group $G$
and let $X = \Var(E)$.
Consider the group $G \times \mathbb{Z}^l$. 
For $1 \leq i \leq l$ let $e_i \in \mathbb{Z}^l$ be the $i$-th unit vector from $ \mathbb{Z}^l$.
For every $x \in X$ define 
$$
e_x = \sum_{1 \leq i \leq l, x_i = x} \!\!\! e_i \ \in \ \mathbb{Z}^l \quad\text{ and }\quad
h_x = (1, e_x) \in G \times \mathbb{Z}^l.
$$
Note that $h_x$ is central in $G \times \mathbb{Z}^l$.
Moreover, for $1 \leq i \leq l$ let
$$
h_i = (g_i, -e_i) \in G \times \mathbb{Z}^l.
$$
Then, for a given group element $g \in G$,
we have $S(E=g) \neq \emptyset$ if and only if 
$$
(g,0) \in \prod_{x \in X} \langle h_x \rangle \prod_{i=1}^l \langle h_i \rangle.
$$
By applying the above construction to the fixed exponential expression $E$ over the fixed group $G=H_3(\mathbb{Z})^d \times \mathbb{Z}^e$
from Proposition~\ref{prop-exponential-expression}, we obtain (note that $\mathbb{Z} \leq H_3(\mathbb{Z})$):

\begin{theorem} \label{thm-product-cyclic-undec}
There exist a fixed constant $d$ and a fixed list $g_1, \ldots, g_\lambda \in H_3(\mathbb{Z})^d$ of group elements such that
membership in the product  $\prod_{i=1}^{\lambda} \langle g_i \rangle$ is undecidable.
\end{theorem}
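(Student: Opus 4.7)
The plan is to verify in detail the construction sketched in the paragraph preceding the theorem. Fix the exponential expression $E = g_1^{x_1} g_2^{x_2} \cdots g_l^{x_l}$ over the fixed group $G = H_3(\mathbb{Z})^d \times \mathbb{Z}^e$ and the fixed variable set $X = \Var(E)$ provided by Proposition~\ref{prop-exponential-expression}. Pass to the group $G \times \mathbb{Z}^l$ and define, for every variable $x \in X$, the central element $h_x = (1, e_x)$ with $e_x = \sum_{i:\, x_i = x} e_i$, and, for every $1 \leq i \leq l$, the element $h_i = (g_i, -e_i)$. The generating list for the target product consists of all $h_x$ (in any order) followed by $h_1, h_2, \ldots, h_l$ in this specific order, giving a total of $\lambda = |X| + l$ fixed group elements.

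The central technical step is the equivalence
\[
S(E = g) \neq \emptyset \iff (g, 0) \in \prod_{x \in X} \langle h_x \rangle \cdot \prod_{i=1}^l \langle h_i \rangle .
\]
Because each $h_x$ has trivial $G$-component and $\mathbb{Z}^l$ is abelian, $h_x$ is central in $G \times \mathbb{Z}^l$, so an arbitrary element of the right-hand product can be written uniquely as
\[
\prod_{x \in X} h_x^{n_x} \cdot \prod_{i=1}^l h_i^{m_i} = \Bigl(g_1^{m_1} g_2^{m_2} \cdots g_l^{m_l},\; \sum_{x \in X} n_x e_x - \sum_{i=1}^l m_i e_i\Bigr).
\]
Since $\{e_1, \ldots, e_l\}$ is a $\mathbb{Z}$-basis of $\mathbb{Z}^l$, the second coordinate vanishes precisely when $m_i = n_{x_i}$ for every $i$. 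In that case the $n_x$ define an assignment $\nu : X \to \mathbb{Z}$ by $\nu(x) = n_x$, and the first-coordinate equation becomes $g_1^{\nu(x_1)} g_2^{\nu(x_2)} \cdots g_l^{\nu(x_l)} = g$, i.e.\ $\nu \in S(E = g)$. Conversely every $\nu \in S(E = g)$ yields such a factorisation by setting $n_x = \nu(x)$ and $m_i = \nu(x_i)$.

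To land inside a power of $H_3(\mathbb{Z})$, embed each of the $e + l$ cyclic $\mathbb{Z}$-factors of $\mathbb{Z}^{e+l}$ into the center $Z(H_3(\mathbb{Z})) \cong \mathbb{Z}$ of a fresh copy of $H_3(\mathbb{Z})$. This yields an embedding $G \times \mathbb{Z}^l \hookrightarrow H_3(\mathbb{Z})^{d + e + l}$, and since $d$, $e$, and $l$ are absolute constants (coming from the fixed $E$), the new exponent $d' = d + e + l$ and the list $g_1, \ldots, g_\lambda$ (the images of $h_x$ and $h_i$ under this embedding) are fixed. The input to the membership problem is the image of $(g, 0)$, which depends only on the integer $a$ from Proposition~\ref{prop-exponential-expression}; hence undecidability of the existence of $(z_1, \ldots, z_n)$ with $P(z_1, \ldots, z_n) = a$ transfers to undecidability of membership in $\prod_{i=1}^\lambda \langle g_i \rangle$.

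The only point requiring care is the interaction between the non-commutativity of $G$ and the ordering of factors in the product: this is precisely handled by placing the central $h_x$ first (so their order is irrelevant and they contribute nothing to the $G$-coordinate) and by keeping the $h_i$ in the left-to-right order $1, 2, \ldots, l$ matching the syntactic order of the base elements in $E$, so that the $G$-coordinate of the product is literally $g_1^{m_1} g_2^{m_2} \cdots g_l^{m_l}$ as required.
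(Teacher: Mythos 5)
Your proposal is correct and follows exactly the construction the paper itself uses: the same passage to $G\times\mathbb{Z}^l$ with the central elements $h_x=(1,e_x)$ and the elements $h_i=(g_i,-e_i)$, the same equivalence $S(E=g)\neq\emptyset \iff (g,0)\in\prod_{x\in X}\langle h_x\rangle\prod_{i=1}^l\langle h_i\rangle$, and the same embedding of the $\mathbb{Z}$-factors into centers of fresh copies of $H_3(\mathbb{Z})$. You merely spell out the bookkeeping (matching $m_i=n_{x_i}$ via the basis $e_1,\ldots,e_l$) that the paper leaves implicit, so there is nothing to add.
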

In particular, we have:

\begin{theorem} \label{thm-knapsack-undec}
There exists a  fixed constant $d$ such that $\KP{H_3(\mathbb{Z})^d}{X}$
is undecidable.
\end{theorem}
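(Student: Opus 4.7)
The plan is a direct one-step reduction from the undecidable problem supplied by Theorem~\ref{thm-product-cyclic-undec} to $\KP{H_3(\mathbb{Z})^d}{X}$, reusing the very same constant $d$. Theorem~\ref{thm-product-cyclic-undec} provides fixed group elements $g_1,\ldots,g_\lambda \in H_3(\mathbb{Z})^d$ such that, given a target $g \in H_3(\mathbb{Z})^d$, it is undecidable whether there exist integers $e_1,\ldots,e_\lambda \in \mathbb{Z}$ with $g = g_1^{e_1} g_2^{e_2} \cdots g_\lambda^{e_\lambda}$. This is precisely the $\mathbb{Z}$-exponent variant of knapsack with the base elements $g_1,\ldots,g_\lambda$ held fixed and only the target $g$ varying in the input.

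To translate this into the $\mathbb{N}$-exponent version required by $\KP{\cdot}{X}$, I would invoke the standard trick already recorded at the end of Section~3: any exponent $e_i \in \mathbb{Z}$ may be written as $e_i = c_i - d_i$ with $c_i,d_i \in \mathbb{N}$, and then $g_i^{e_i} = g_i^{c_i}(g_i^{-1})^{d_i}$ holds in any group. Given the input target $g$, the reduction outputs the knapsack instance with $2\lambda$ base elements $g_1, g_1^{-1}, g_2, g_2^{-1}, \ldots, g_\lambda, g_\lambda^{-1}$ and target $g$. A non-negative solution $(c_1, d_1, \ldots, c_\lambda, d_\lambda) \in \mathbb{N}^{2\lambda}$ of this instance yields integers $e_i := c_i - d_i$ with $g_1^{e_1} \cdots g_\lambda^{e_\lambda} = g$; conversely, any integer solution $(e_1,\ldots,e_\lambda)$ is realized by setting $c_i := \max(e_i, 0)$ and $d_i := \max(-e_i, 0)$. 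Hence the two problems have the same yes-instances.

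Since the base list $g_1, g_1^{-1}, \ldots, g_\lambda, g_\lambda^{-1}$ is fixed and independent of the input, the reduction essentially only copies the target element $g$, and is in particular computable (indeed computable in logspace). The undecidability of the membership problem from Theorem~\ref{thm-product-cyclic-undec} therefore transfers immediately to $\KP{H_3(\mathbb{Z})^d}{X}$. I do not anticipate any real obstacle in carrying this out; the genuine mathematical content of Theorem~\ref{thm-knapsack-undec} is already packaged into Theorem~\ref{thm-product-cyclic-undec} (and upstream into Proposition~\ref{prop-exponential-expression} together with Matiyasevich's theorem), so the present step is a routine bookkeeping reduction from the integer-exponent formulation to the non-negative-exponent formulation.
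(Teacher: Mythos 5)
Your proposal is correct and follows exactly the route the paper intends: Theorem~\ref{thm-knapsack-undec} is stated as an immediate consequence of Theorem~\ref{thm-product-cyclic-undec}, using the reduction from integer exponents to natural exponents via $g_i^{e_i} = g_i^{c_i}(g_i^{-1})^{d_i}$ that the paper records at the end of Section~3. You have merely spelled out the bookkeeping that the paper leaves implicit, and it checks out.
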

Finally, from the construction in the previous section, we also obtain the following 
undecidability result.

\begin{theorem} \label{thm-4-subgroups}
There exist a fixed constant $d$ and a fixed list of four abelian subgroups $G_1, G_2, G_3, G_4 \leq  H_3(\mathbb{Z})^d$  such that
membership in the product  $G_1 G_2 G_3 G_4$ is undecidable.
\end{theorem}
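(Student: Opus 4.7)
The plan is a direct refinement of the construction behind Theorem \ref{thm-product-cyclic-undec}: bundle the cyclic factors it produces into exactly four abelian subgroups by exploiting the commutation structure of the fixed exponential expression $E$ made explicit in Remark \ref{remark-commutation}.

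First, I take the fixed $E$ over $G = H_3(\mathbb{Z})^d \times \mathbb{Z}^e$ from Proposition \ref{prop-exponential-expression} and, via Remark \ref{remark-commutation}, decompose $E = E_1 E_2 \cdots E_m$ with each $|E_i| \le 4$ and with base elements from distinct pieces commuting. Pad each $E_i$ to length exactly four with identity factors (each equipped with a fresh dummy variable), so $E_i = g_{i,1}^{x_{i,1}} g_{i,2}^{x_{i,2}} g_{i,3}^{x_{i,3}} g_{i,4}^{x_{i,4}}$. Set $\tilde G = G \times \mathbb{Z}^{4m}$ and, following the proof of Theorem \ref{thm-product-cyclic-undec}, introduce $h_{i,j} = (g_{i,j}, -e_{i,j})$ for each position $(i,j)$ and the central element $h_x = (1, e_x)$ for each variable $x \in \Var(E)$, where $e_x$ is the sum of the unit vectors at the positions where $x$ occurs.

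Now bundle the generators by column:
\[
G_1 = \langle h_{i,1},\ h_x : 1 \le i \le m,\ x \in \Var(E) \rangle, \qquad G_j = \langle h_{i,j} : 1 \le i \le m \rangle \text{ for } j \in \{2,3,4\}.
\]
Each $G_j$ is abelian: the generators $h_{i,j}$ and $h_{i',j}$ with $i \ne i'$ commute because the underlying $g_{i,j}, g_{i',j}$ sit in distinct pieces (Remark \ref{remark-commutation}), and every $h_x$ is central in $\tilde G$. The claim to prove is that for every $g \in G$, the condition $(g,0) \in G_1 G_2 G_3 G_4$ holds if and only if $S(E = g) \neq \emptyset$, which combined with Proposition \ref{prop-exponential-expression} yields undecidability.

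For the nontrivial direction, write $(g,0) = u_1 u_2 u_3 u_4$ with $u_j \in G_j$ and, using abelianness inside each $G_j$, expand $u_j = \prod_i h_{i,j}^{b_{i,j}}$, with $u_1$ additionally carrying a factor $\prod_x h_x^{a_x}$. The $\mathbb{Z}^{4m}$-coordinate of the equation forces $b_{i,j} = a_{x_{i,j}}$ for every position, while the $G$-coordinate yields $\prod_i g_{i,1}^{b_{i,1}} \prod_i g_{i,2}^{b_{i,2}} \prod_i g_{i,3}^{b_{i,3}} \prod_i g_{i,4}^{b_{i,4}} = g$. The delicate step is the reordering argument: every swap needed to rewrite this product as $\prod_{i=1}^m (g_{i,1}^{b_{i,1}} g_{i,2}^{b_{i,2}} g_{i,3}^{b_{i,3}} g_{i,4}^{b_{i,4}})$ moves a base element of some $E_i$ past one from a different piece $E_{i'}$, so it is justified by Remark \ref{remark-commutation}; after reordering the product equals $E(b) = g$, so $\nu(x) := a_x$ is a valid solution. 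The converse is immediate. Finally, embed $\tilde G = H_3(\mathbb{Z})^d \times \mathbb{Z}^{e + 4m}$ into $H_3(\mathbb{Z})^{D}$ with $D = d + e + 4m$ via the standard inclusion $\mathbb{Z} \hookrightarrow H_3(\mathbb{Z})$. Since $d$, $e$, $m$, and $E$ are all fixed, so are $D$ and the four abelian subgroups $G_1, \dots, G_4$. The main obstacle is the clean justification of the reordering, and it is precisely the length-four bound in Remark \ref{remark-commutation} that makes four abelian subgroups sufficient.
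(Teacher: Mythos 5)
Your proposal is correct and follows essentially the same route as the paper: it invokes Remark~\ref{remark-commutation} to split the expression into blocks of length at most four whose base elements commute across blocks, groups the generators of the cyclic factors from Theorem~\ref{thm-product-cyclic-undec} by their position within a block (absorbing the central elements $h_x$ into one of the factors), and uses the cross-block commutation to rearrange the product of cyclic groups into a product of four abelian subgroups. The only difference is that you spell out the verification (padding to length four, the $\mathbb{Z}^{4m}$-coordinate forcing $b_{i,j}=a_{x_{i,j}}$, and the reordering step) which the paper leaves implicit.
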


\begin{proof}
Recall from Remark~\ref{remark-commutation} that the 
exponential expression from Proposition~\ref{prop-exponential-expression} can be written as $E_1 E_2 \cdots E_m$ such that every $E_i$ has length at most $4$,
and every base element $g$ from $E_i$ commutes with every base element $h$ from $E_j$ whenever $i \neq j$. The above construction implies that
the sequence of group elements $g_1, g_2, \ldots, g_\lambda$ from Theorem~\ref{thm-product-cyclic-undec}
can be split into blocks $B_1, B_2, \ldots, B_\mu$ of length at most $4$ such that 
every group element from block $B_i$ commutes with every group element from $B_j$ whenever $i \neq j$.
This allows to rearrange the product of cyclic groups $\prod_{i=1}^{\lambda} \langle g_i \rangle$ as a product of four abelian subgroups
$G_1, G_2, G_3, G_4$, where $G_i$ is  generated by all group elements, which are at the $i$-th position in their block.
\end{proof}

\begin{remark}
In contrast to Theorem~ \ref{thm-4-subgroups}, it was shown in \cite{LeWi79} that a product of two subgroups of a polycyclic group
is closed in the profinite topology. Since polycyclic groups are finitely presented it follows that membership in a product of two
subgroups of a polycyclic group is decidable. This leaves open whether membership in a product of three subgroups of a 
polycyclic (or nilpotent) group is decidable.
\end{remark}

Let us finally prove that the knapsack problem for the discrete Heisenberg group $H_3(\mathbb{Z})$ is decidable.

\begin{theorem} \label{thm-Heisenberg-decidable}
For every $e \geq 0$,  $\KP{H_3(\mathbb{Z}) \times \mathbb{Z}^e}{X}$ is decidable.
\end{theorem}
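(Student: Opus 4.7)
The plan is to reduce $\KP{H_3(\mathbb{Z}) \times \mathbb{Z}^e}{X}$ to the $\mathbb{N}$-solvability of a single quadratic Diophantine equation over a Presburger-definable set of exponent tuples, and then invoke classical decidability results from the theory of quadratic forms.

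I would first parameterize $H_3(\mathbb{Z})$ by Mal'cev coordinates $(a,b,c) \in \mathbb{Z}^3$ (with $a$ at position $(1,2)$, $b$ at $(2,3)$, and $c$ at $(1,3)$), in which the multiplication law is $(a,b,c)(a',b',c') = (a+a',\, b+b',\, c+c'+ab')$ and the power rule is $(a,b,c)^n = (na,\, nb,\, nc + \binom{n}{2}ab)$. Writing the given generators as $g_i = (a_i, b_i, c_i, \mathbf{d}_i) \in H_3(\mathbb{Z}) \times \mathbb{Z}^e$ and the target as $g = (A, B, C, \mathbf{D})$, a routine induction on the number of factors then yields that the equation $g_1^{x_1} \cdots g_k^{x_k} = g$ is equivalent to the conjunction of the linear Diophantine equations
$$\sum_i x_i a_i = A,\quad \sum_i x_i b_i = B,\quad \sum_i x_i \mathbf{d}_i = \mathbf{D},$$
together with the single quadratic equation
$$Q(x_1,\ldots,x_k) \,:=\, \sum_i x_i c_i + \sum_i \binom{x_i}{2}a_i b_i + \sum_{i<j} x_i x_j\, a_i b_j \;=\; C.$$

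Next, I would exploit the fact that the set $\Lambda \subseteq \mathbb{N}^k$ of non-negative integer solutions of the linear part is Presburger-definable, and hence can be effectively represented as a finite union of linear sets $\Lambda = \bigcup_{j=1}^s \bigl(p_j + \mathbb{N} v_{j,1} + \cdots + \mathbb{N} v_{j,r_j}\bigr)$. For each $j$, substituting the parametric form $x_i = p_{j,i} + \sum_{\ell} \lambda_\ell\, v_{j,\ell,i}$ into $Q$ produces a quadratic polynomial $\widetilde{Q}_j \in \mathbb{Z}[\lambda_1, \ldots, \lambda_{r_j}]$, and the original input is a yes-instance if and only if the equation $\widetilde{Q}_j(\lambda) = C$ admits a solution $\lambda \in \mathbb{N}^{r_j}$ for at least one index $j$.

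The final step is to decide $\mathbb{N}$-solvability of a single quadratic Diophantine equation in several variables. This is the main obstacle of the proof: although the reduction above is elementary, the concluding decidability statement rests on nontrivial number-theoretic tools (Siegel's theory of integral quadratic forms, the Hasse-Minkowski local-global principle, and the algorithmic refinements of Grunewald and Segal), and the positivity constraints on the variables must be dealt with by an explicit sign-case analysis. It is instructive to observe that the entire argument depends on obtaining \emph{exactly one} quadratic equation per copy of $H_3(\mathbb{Z})$; this is precisely why the procedure breaks down for the direct product $H_3(\mathbb{Z})^d$ with $d$ large, consistent with the undecidability stated in Theorem~\ref{thm-knapsack-undec} (where several independent quadratic equations already suffice to encode arbitrary Diophantine sets).
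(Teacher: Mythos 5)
Your proof is correct, and its first half coincides with the paper's: the same Mal'cev-coordinate power formula reduces the knapsack instance to $2+e$ linear Diophantine equations together with the single quadratic equation $Q=C$ coming from the $(1,3)$-entry, and both arguments ultimately rest on the Grunewald--Segal decision procedure for a single quadratic equation. Where you genuinely diverge is in how the linear constraints are absorbed. The paper cites the result of Duchin, Liang, and Shapiro that a system consisting of one quadratic equation and arbitrarily many linear equations can be replaced by a \emph{single} quadratic equation with the same solutions, and then applies Grunewald--Segal once, in the original $k$ variables. You instead observe that the set $\Lambda\subseteq\N^k$ of nonnegative solutions of the linear part is effectively semilinear, decompose it into finitely many linear sets, and substitute the affine parametrization of each into $Q$, obtaining one quadratic equation in fresh parameters per linear set, each of which is handed to Grunewald--Segal separately. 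Both routes are sound: yours is more self-contained on the reduction side (semilinearity of solution sets of linear systems over $\N$ is classical, e.g.\ via Ginsburg--Spanier), but pays with a potentially large number of instances and of parameters; the paper's route avoids that blow-up at the cost of invoking an extra structural result. One small correction: you suggest that the positivity constraints require a separate sign-case analysis on top of the local-global machinery, but the Grunewald--Segal result as used in the paper already decides solvability over $\N$ directly, so no additional work is needed there. Your closing observation --- that the argument hinges on getting exactly one quadratic equation, which is why it cannot be iterated over $H_3(\Z)^d$ --- is exactly the right intuition and is consistent with Theorem~\ref{thm-knapsack-undec}.
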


\begin{proof}
Let us first show the result for $H_3(\mathbb{Z})$.
Take matrixes $A, A_1, \ldots, A_l \in H_3(\mathbb{Z})$ and let
$$
A = \left( 
\begin{array}{ccc}
1 & a & c \\
0 & 1 & b \\
0 & 0 & 1
\end{array} 
\right)
\qquad \text{and} \qquad
A_i = \left( 
\begin{array}{ccc}
1 & a_i & c_i \\
0 & 1 & b_i \\
0 & 0 & 1
\end{array} 
\right)
$$
A straightforward induction over $n$ shows that
$$
A_i^n =  \left( 
\begin{array}{ccc}
1 & a_i \cdot n & \ c_i \cdot n + a_i b_i \frac{(n-1) n}{2} \\
0 & 1 & b_i \cdot n \\
0 & 0 & 1
\end{array} 
\right)
$$
Hence, there is a solution $(x_1, \ldots, x_l) \in \mathbb{N}^l$ of 
$A = A_1^{x_1} \cdots   A_l^{x_l}$ if and only if the following system of three
Diophantine equations has a solution over $\N$:
\begin{gather*}
a = \sum_{i=1}^l a_i \cdot x_i \\
b = \sum_{i=1}^l b_i \cdot x_i \\
c = \sum_{i=1}^l c_i \cdot x_i  +  \sum_{i=1}^l  a_i b_i \frac{(x_i-1) x_i}{2} +  \sum_{1 \leq i < j \leq l}  a_i b_j x_i x_j
\end{gather*}
This is a Diophantine system with a single quadratic equation and two linear equations. 
By \cite{DuLiSha14}, a system consisting of a single quadratic Diophantine equation together with an arbitrary number of linear equations 
can be reduced to a single quadratic Diophantine equation, which has the same solutions over $\Z$. 
By \cite{GrSe04}, one can decide whether this quadratic Diophantine equation has a solution over $\N$.

Finally, the above proof also works for the group $H_3(\mathbb{Z}) \times \mathbb{Z}^e$, since 
we only get additional linear equations. 
\end{proof}

\begin{corollary} \label{coro-knapsack-direct-product}
The class of f.g. groups with a decidable knapsack problem is not closed under direct products.
\end{corollary}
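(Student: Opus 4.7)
The plan is simply to juxtapose the two preceding theorems in this section. I would take $G = H_3(\mathbb{Z})$ as the witness group: by Theorem~\ref{thm-Heisenberg-decidable} (applied with $e = 0$), the knapsack problem $\KP{G}{X}$ is decidable, so $G$ lies in the class of finitely generated groups with decidable knapsack.

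Next I would invoke Theorem~\ref{thm-knapsack-undec}, which provides a fixed constant $d$ such that $\KP{H_3(\mathbb{Z})^d}{X}$ is undecidable. Since $H_3(\mathbb{Z})^d$ is literally the $d$-fold direct product $G \times G \times \cdots \times G$, this exhibits a finite family of groups from the class whose direct product falls outside it. Iterating the binary direct product $d-1$ times (and using that each intermediate factor $H_3(\mathbb{Z})^k$ with $k < d$ is again a direct product of copies of $G$) shows that the class cannot be closed under binary direct products either, which is exactly what the corollary asserts. There is no real obstacle: the only thing to remark is that ``closed under direct products'' quantifies over finite families of factors, so exhibiting one iterated product of decidable groups that is undecidable is enough.
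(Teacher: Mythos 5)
Your argument is correct and is exactly the paper's proof: the corollary is derived by combining Theorem~\ref{thm-Heisenberg-decidable} (decidability for $H_3(\mathbb{Z})$, the case $e=0$) with Theorem~\ref{thm-knapsack-undec} (undecidability for $H_3(\mathbb{Z})^d$). Your additional remark on iterating the binary product is a harmless elaboration of a point the paper leaves implicit.
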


\begin{proof}
This follows directly from Theorem~\ref{thm-knapsack-undec} and \ref{thm-Heisenberg-decidable}.
\end{proof}

\section{Knapsack problems for finite extensions}

We show that in contrast to direct products, decidability of the knapsack problem is preserved under
finite extensions.
For this, it will be convenient to consider a slightly extended version of the
knapsack problem, which we will prove equivalent (with respect to
polynomial time reducibility) to the knapsack problem. The \emph{generalized
knapsack problem} (briefly $\GKP{G}{X}$) is the following decision
problem: Given $g_1,\ldots,g_k\in G$ and $f_0,\ldots,f_k\in G$, decide
whether
\begin{equation} f_0g_1^{n_1}f_1g_2^{n_2}f_2\cdots g_k^{n_k}f_k=1 \label{gkpeq} \end{equation}
for some $n_1,\ldots,n_k\in\N$. An \emph{instance} of the generalized
knapsack problem is therefore a tuple $(f_0, g_1, f_1, \ldots ,
g_k, f_k)$ with $f_0,\ldots,f_k\in G$ and $g_1,\ldots,g_k\in G$.
If \eqref{gkpeq} holds, we call the tuple $(n_1,\ldots,n_k)$ a
\emph{solution}. If two instances have the same set of solutions, we
call them \emph{equivalent}.


\begin{proposition}\label{kpvsgkp}
$\KP{G}{X}$ and $\GKP{G}{X}$ are inter-reducible in polynomial time.
\end{proposition}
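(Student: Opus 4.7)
The plan is to prove both directions of the inter-reducibility separately, with the nontrivial direction handled by a standard conjugation trick.

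For the direction $\KP{G}{X} \leq \GKP{G}{X}$, I would observe that a knapsack instance asking whether $g_1^{e_1}\cdots g_k^{e_k} = g$ is equivalent to the generalized knapsack instance $(g^{-1}, g_1, 1, g_2, 1, \ldots, g_k, 1)$, i.e., whether $g^{-1} g_1^{e_1}\cdots g_k^{e_k} = 1$. This is clearly computable in polynomial time (essentially just prepending the inverse word of $g$).

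For the more interesting direction $\GKP{G}{X} \leq \KP{G}{X}$, the idea is to eliminate the intermediate constants $f_i$ by absorbing them into conjugates of the $g_i$. Given an instance $(f_0, g_1, f_1, \ldots, g_k, f_k)$, let $p_i := f_0 f_1 \cdots f_{i-1}$ for $1 \leq i \leq k+1$, viewed as words over the generators (so $p_i$ has polynomial length in the input). Inserting $p_i^{-1} p_i = 1$ between successive factors, one verifies
\[
f_0 g_1^{n_1} f_1 g_2^{n_2} f_2 \cdots g_k^{n_k} f_k \;=\; (p_1 g_1 p_1^{-1})^{n_1} (p_2 g_2 p_2^{-1})^{n_2} \cdots (p_k g_k p_k^{-1})^{n_k} \, p_{k+1},
\]
so equation \eqref{gkpeq} is equivalent to
\[
h_1^{n_1} h_2^{n_2} \cdots h_k^{n_k} \;=\; p_{k+1}^{-1},
\]
where $h_i := p_i g_i p_i^{-1}$. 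Thus the GKP instance is equivalent to the knapsack instance with base elements $h_1,\ldots,h_k$ and target $p_{k+1}^{-1}$, and the words $h_i$ and $p_{k+1}^{-1}$ can clearly be written down in polynomial time (indeed in logspace) from the input.

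The main obstacle, if one can call it that, is simply verifying the telescoping identity above and making sure the solution sets coincide on the nose (so that the reduction is correct for every tuple $(n_1,\ldots,n_k) \in \N^k$); this is a routine calculation using $p_{i+1} = p_i f_i$, so no essential difficulty arises. Since both directions are polynomial-time, the two problems are inter-reducible as claimed.
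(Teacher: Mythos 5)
Your proof is correct and follows essentially the same route as the paper: both directions rely on the same conjugation trick to absorb the intermediate constants $f_i$ into the base elements. The only cosmetic difference is that you conjugate each $g_i$ by the explicit prefix product $p_i=f_0\cdots f_{i-1}$ in one shot, whereas the paper performs the equivalent rewriting iteratively from the right (replacing $f_{i-1},g_i,f_i$ by $f_{i-1}f_i$, $f_i^{-1}g_if_i$, $1$); the resulting reductions are the same up to a global conjugation and both clearly run in polynomial time.
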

\begin{proof}
Since $g_1^{n_1}\cdots g_k^{n_k}=g$ if and only if
$g^{-1}g_1^{n_1}\cdots g_k^{n_k}=1$, $\KP{G}{X}$ clearly reduces to
$\GKP{G}{X}$ in polynomial time.

Let us reduce $\GKP{G}{X}$ to $\KP{G}{X}$. Let $(f_0,g_1,f_1,\ldots,
g_k,f_k)$ be an instance of $\GKP{G}{X}$. Observe that since
$g_i^{n_i}f_i=f_i(f_i^{-1}g_if_i)^{n_i}$, if we replace $f_{i-1}$,
$g_i$, and $f_i$ by $f_{i-1}f_i$, $f_i^{-1}g_if_i$ and $1$,
respectively, we obtain an equivalent instance in which $f_i=1$. By
repeating this step $k$ times, starting with $f_k$, we arrive at an
instance with $f_1=\cdots=f_k=1$. Then, clearly, $f_0g_1^{n_1}\cdots
g_k^{n_k}=1$ is equivalent to $g_1^{n_1}\cdots g_k^{n_k}=f_0^{-1}$.
\end{proof}

From now on, let $G$ be finitely generated and $H$ be a finite index subgroup
of $G$, which is therefore finitely generated too.  Furthermore, let
$R\subseteq G$ be a finite set of representatives of right cosets of $H$ in
$G$.  Then for each $g\in G$, there is a unique $\rho(g)\in R$ such that $g\in
H\rho(g)$.  Also recall from the proof of Theorem~\ref{thm-ARatMP-finite-index}
that from a given element $g \in G$ we can compute effectively a decomposition
$g = h r$ with $h \in H$ and $r \in R$.  This fact will be implicitly used
throughout this section.


\begin{lemma}\label{moveright}
Let $g_1,g_2\in G$ and $\rho(g_1g_2)=\rho(g_1)$. We can compute $h_1,h_2\in H$
and $r\in R$ such that $g_1g_2^t=h_1h_2^tr$ for every $t\ge 0$.
\end{lemma}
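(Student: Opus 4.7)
The plan is to exploit the fact that $\rho(g_1 g_2)=\rho(g_1)$ forces a conjugation by the coset representative of $g_1$ to land in $H$, which in turn makes the powers of $g_2$ easy to slide past.

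First I would decompose $g_1$ in the canonical way: using the algorithm discussed just before the lemma statement (the same coset-chasing procedure used in the proof of Theorem~\ref{thm-ARatMP-finite-index}), compute $h_1 \in H$ and $r \in R$ with $g_1 = h_1 r$ and $r = \rho(g_1)$. Next I would define $h_2 := r g_2 r^{-1}$, which is an element of $G$ computable from the data. The key step is to observe that $h_2$ actually lies in $H$: since
\[ g_1 g_2 = h_1 r g_2 = h_1 (r g_2 r^{-1}) r = h_1 h_2 r, \]
and since by hypothesis $\rho(g_1g_2)=\rho(g_1)=r$, we get $g_1g_2 \in Hr$, hence $h_1 h_2 r \in Hr$, hence $h_2 \in H$ (as $h_1 \in H$). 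With $h_1, h_2, r$ now all in hand, the same telescoping identity gives, for every $t \ge 0$,
\[ g_1 g_2^t = h_1 r g_2^t = h_1 (r g_2 r^{-1})^t r = h_1 h_2^t r, \]
which is the required expression.

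The only computational point to check is effectiveness: $h_1$ and $r$ come from the effective decomposition already invoked in the preceding material, and $h_2 = r g_2 r^{-1}$ is obtained by a single multiplication in $G$ followed by the same decomposition applied to the resulting element (whose $R$-part must be trivial by the argument above). I do not foresee any real obstacle here; the main conceptual point is simply to recognize that the hypothesis $\rho(g_1g_2)=\rho(g_1)$ is exactly what is needed to make the conjugate $r g_2 r^{-1}$ stay inside $H$, so that powers of $g_2$ can be rewritten as powers of $h_2$ without disturbing the trailing coset representative.
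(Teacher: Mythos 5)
Your proposal is correct and matches the paper's argument: the paper obtains $h_2$ from the decomposition $rg_2=h_2r_2$ and deduces $r_2=r$ by uniqueness of coset representatives, which is exactly your observation that $h_2=rg_2r^{-1}$ lies in $H$ because $g_1g_2\in Hr$; the telescoping $rg_2^t=h_2^tr$ is then identical. The only cosmetic difference is that you define $h_2$ as a conjugate and verify membership in $H$, while the paper defines it via the effective $H$--$R$ decomposition and verifies the representative is unchanged.
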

\begin{proof}
Since $\rho(g_1g_2)=\rho(g_1)$, we can write $g_1=h_1r$ and $g_1g_2=h_{12}r$ for $h_1, h_{12} \in H$ and $r \in R$.
Moreover, we can find $h_2\in H$ and $r_2\in R$ with $rg_2=h_2r_2$. Then
\[ h_{12}r=g_1g_2=h_1rg_2=h_1h_2r_2 \]
and hence $r_2=r$. This means $rg_2=h_2r$ and thus $rg_2^t=h_2^tr$ and
\[ g_1g_2^t=h_1rg_2^t=h_1h_2^tr. \]
\end{proof}

\begin{theorem}\label{finiteextensions}
Let $H$ be a finite-index subgroup of a finitely generated group $G$.
Then $\KP{G}{X}$ is decidable if and only if $\KP{H}{X}$ is decidable.
\end{theorem}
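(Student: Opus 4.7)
The forward direction ($\KP{G}{X}$ decidable implies $\KP{H}{X}$ decidable) is essentially immediate: fixing expressions of the generators of $H$ as words over the generators of $G$, any instance of $\KP{H}{X}$ translates directly into an instance of $\KP{G}{X}$, and the equation $h_1^{x_1}\cdots h_k^{x_k}=h$ holds in $H$ iff it holds in $G$ (since $H\leq G$). The interesting direction is the converse: assuming $\KP{H}{X}$ is decidable, I aim to derive decidability of $\KP{G}{X}$. By \cref{kpvsgkp} it suffices to reduce $\GKP{G}{X}$ to $\GKP{H}{X}$.

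Given an instance $(f_0,g_1,f_1,\ldots,g_k,f_k)$ of $\GKP{G}{X}$, the plan is to exploit the periodicity of the cosets $Hg_i^n$. For each $i$, consider the permutation $\sigma_{g_i}$ of $R$ sending $r\mapsto\rho(rg_i)$, and let $p_i$ be its order in $\mathrm{Sym}(R)$ (bounded by $|R|!$). Then $y_i:=g_i^{p_i}$ lies in the normal core $N=\bigcap_{r\in R}r^{-1}Hr$ of $H$ in $G$; equivalently, $\rho(gy_i)=\rho(g)$ for every $g\in G$. Any non-negative integer $n_i$ decomposes as $n_i=p_iq_i+s_i$ with $0\le s_i<p_i$ and $q_i\ge 0$, so $g_i^{n_i}=y_i^{q_i}g_i^{s_i}$. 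I would branch over the finitely many guesses $(s_1,\ldots,s_k)\in\prod_{i=1}^k\{0,\ldots,p_i-1\}$; the original instance has a solution iff some guess does.

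For each fixed guess, the equation becomes $f_0\,y_1^{q_1}u_1\,y_2^{q_2}u_2\cdots y_k^{q_k}u_k=1$ where $u_i:=g_i^{s_i}f_i$. The next step is to apply \cref{moveright} iteratively from left to right, pushing each $y_i^{q_i}$ past the constant immediately preceding it: the hypothesis $\rho(gy_i)=\rho(g)$ holds for any $g$ since $y_i\in N$, so the lemma rewrites the current prefix multiplied by $y_i^{q_i}$ as $h_1^{(i)}(h_2^{(i)})^{q_i}r_i$ with $h_1^{(i)},h_2^{(i)}\in H$ and $r_i\in R$. The residual representative $r_i$ is absorbed into the next constant $u_i$ to form the prefix for the subsequent step. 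After $k$ applications, the equation takes the form
$$h_1^{(1)}(h_2^{(1)})^{q_1}h_1^{(2)}(h_2^{(2)})^{q_2}\cdots h_1^{(k)}(h_2^{(k)})^{q_k}\cdot F=1,$$
where $F\in G$ is a computable constant depending on the guess. Since the $h_j^{(i)}$'s lie in $H$, the equation can hold only when $F\in H$, and this is decidable because $\rho$ is computable. When $F\in H$, what remains is a $\GKP{H}{X}$ instance, decidable by assumption.

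The key technical point is the choice of period: taking $p_i$ so that $y_i$ lies in the normal core of $H$, rather than merely in $H$, is essential, because otherwise a conjugate $u^{-1}y_iu$ need not remain in $H$ and the iterated pushing via \cref{moveright} would break down. Once $y_i\in N$ is secured every ingredient is effectively computable (the permutations $\sigma_{g_i}$, the orders $p_i$, and the rewriting produced by the lemma), and the reduction yields finitely many $\GKP{H}{X}$ instances whose disjunction is equivalent to the input, giving decidability of $\GKP{G}{X}$.
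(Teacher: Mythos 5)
Your proof is correct, and it runs on the same engine as the paper's argument --- \cref{kpvsgkp} to pass to the generalized problem, and \cref{moveright} to turn a periodic coset trajectory into a power of an element of $H$ --- but you organize the reduction genuinely differently. The paper proceeds by induction on the ``impurity'' of an instance: at the leftmost impure position $j$ it writes $f_j=hr$, finds instance-specific constants $m,\ell$ with $\rho(rg_{j+1}^m)=\rho(rg_{j+1}^{m+\ell})$, splits the exponent as $n=m+t\ell+s$, and produces finitely many instances of strictly smaller impurity (including a preperiod case $n<m$ that eliminates the power entirely). You instead observe that $r\mapsto\rho(rg_i)$ is a \emph{permutation} of $R$, so its order $p_i$ is a uniform period with no preperiod and $y_i=g_i^{p_i}$ lies in the normal core $\bigcap_{r\in R}r^{-1}Hr$; this lets you branch once, up front, over all residue tuples $(s_1,\ldots,s_k)$ and then purify the whole instance in a single left-to-right pass instead of by induction. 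The correctness bookkeeping is arguably cleaner in your version (the substitution $n_i=p_iq_i+s_i$ gives an exact bijection between solution sets, and the preperiod case disappears), and your identification of the normal core as the exact condition making the hypothesis $\rho(Py_i)=\rho(P)$ of \cref{moveright} valid for an \emph{arbitrary} prefix $P$ is precisely the right technical point --- it is what you pay for using one global period per $g_i$ rather than adapting the period to the coset currently sitting in front of it, as the paper does. Both routes end the same way: discard the instances whose trailing constant falls outside $H$, and hand the rest to the $\GKP{H}{X}$ oracle.
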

\begin{proof}
Since the ``only if'' direction is trivial, it remains to prove the
``if'' direction. According to \cref{kpvsgkp}, it suffices to show that
if $\GKP{H}{X}$ is decidable, then $\GKP{G}{X}$ is decidable.

We say that an instance $I=(f_0,g_1,f_1,\ldots,g_k,f_k)$ of $\GKP{G}{X}$
is \emph{$j$-pure} if $f_0,g_1,\ldots,f_{j-1},g_j\in H$. In particular,
every instance is $0$-pure. We call an instance \emph{pure} if it is
$k$-pure. If an instance is $j$-pure, but not $(j+1)$-pure, then $k-j$
is its \emph{impurity}.

First, we prove the following claim by induction on the impurity of $I$: For
every instance $I=(f_0,g_1,f_1,\ldots,g_k,f_k)$ of $\GKP{G}{X}$, we can
construct finitely many pure instances of $\GKP{G}{Y}$ such that the solution
set of $I$ is the union of affine images of their solution sets.

Suppose $I$ is $j$-pure but not $(j+1)$-pure.  Write $f_j=hr$ for $h\in H$ and
$r\in R$.  Since $R$ is finite, there are $m,\ell\in\N$ with
$\rho(rg_{j+1}^m)=\rho(rg_{j+1}^{m+\ell})$. We use \cref{moveright} to find
$h_1,h_2\in H$ and $r'\in R$ such that $rg_{j+1}^{m+t\ell}=h_1h_2^tr'$ for all
$t\ge 0$.  In particular
\[ f_jg_{j+1}^{m+t\ell}=hrg_{j+1}^{m+t\ell}=hh_1h_2^tr'. \]
We can also find for each $0\le s<m$ elements $\hat{h}_s\in H$ and
$\hat{r}_s\in R$ with $rg_{j+1}^s=\hat{h}_s\hat{r}_s$.  Finally, we can find
for each $0\le s<\ell$ a decomposition $r'g_{j+1}^s=\bar{h}_s\bar{r}_s$ with
$\bar{h}_s\in H$, $\bar{r}_s\in R$.  Note that each element $f_jg_{j+1}^n$ can
be written in one of the following forms:
\begin{align*}
f_jg_{j+1}^n&=h\hat{h}_s\hat{r}_s&&\text{for some $0\le s<m$,} \\
f_jg_{j+1}^n&=hh_1h_2^t\bar{h}_s\bar{r}_s&&\text{for some $0\le s<\ell$ and $t\ge 0$}.
\end{align*}
Here, the first equality holds if $n<m$ and the second one holds if $n\ge m$
and $n=m+t\ell+s$ with $0\le s<\ell$.

We therefore construct two types of instances.  The first type consists of the
instances
\[ (f_0,g_1,f_1,\ldots,g_{j-1},f_{j-1},g_j,h\hat{h}_s\hat{r}_sf_{j+1},g_{j+2},f_{j+2},\ldots,g_k,f_k), \]
for $0\le s<m$. The second type consists of instances
\[ (f_0, g_1,f_1, \ldots, g_{j-1},f_{j-1}, g_j,hh_1, h_2,\bar{h}_s\bar{r}_sf_{j+1}, g_{j+2},f_{j+2},\ldots, g_k,f_k) \]
for each $0\le s<\ell$. Observe that $I$ has a solution if and only if
one of these new instances has one. Furthermore, each of these instances
has lower impurity than $I$. Hence, the induction hypothesis yields the
desired finite set of instances. This proves our claim.

Let us now prove the \lcnamecref{finiteextensions}. Given an
instance $I$ of $\GKP{G}{X}$, we construct pure instances
$I_1,\ldots,I_m$ of $\GKP{G}{X}$ such that $I$ has a solution if and
only if one of $I_1,\ldots,I_m$ has one. Since $I_i$ is pure, if
$I_i=(f_0,g_1,f_1,\ldots,g_k,f_k)$, then $f_0,g_1,\ldots,f_{k-1},g_k\in
H$, but $f_k$ may not be in $H$. However, the equation
\[ f_0g_1^{n_1}f_1\cdots g_k^{n_k}f_k=1 \]
can only have a solution if $f_k\in H$. Moreover, if $f_k\in H$,
then $I$ is in fact an instance of $\GKP{H}{Y}$. Since we can decide
whether $f_k\in H$, we can pick from $I_1,\ldots,I_m$ those that are
instances of $\GKP{H}{Y}$. This means, from $I$ we have constructed
finitely many instances of $\GKP{H}{Y}$ such that $I$ has a solution
if and only if one of the new instances has one. This proves the
\lcnamecref{finiteextensions}.
\end{proof}

\section{Knapsack problems for co-context-free groups}\label{cocf}

In this \lcnamecref{cocf}, we exhibit another class of groups with a decidable
knapsack problem, namely co-context-free groups, which we introduce first. 

A \emph{language} is a subset of a free monoid $X^*$, where
$X$ is an \emph{alphabet}, i.e. a finite set of abstract symbols. A
\emph{context-free grammar} is a tuple $\Gamma=(N,T,P,S)$, where
\begin{itemize}
\item $N$ and $T$ are disjoint alphabets, their members are called \emph{nonterminals} and \emph{terminals}, respectively,
\item $P\subseteq N\times (N\cup T)^*$ is a finite set of \emph{productions},
\item $S\in N$ is the \emph{start symbol}.
\end{itemize}
A production $(A,w)\in P$ is also denoted $A\to w$.  In a context-free grammar,
the productions allow us to rewrite words.  Specifically, for $u,v\in (N\cup
T)^*$, we write $u\grammarstep[\Gamma] v$ if there are $x,y\in (N\cup T)^*$ such
that $u=xAy$ and $v=xwy$ for some production $A\to w$ in $P$. Furthermore,
$\grammarsteps[\Gamma]$ denotes the reflexive transitive closure of
$\grammarstep[\Gamma]$. The language \emph{generated by $\Gamma$} is then defined as
\[ L(\Gamma)=\{w\in T^* \mid S\grammarsteps[\Gamma] w \}. \]
A language is called \emph{context-free} if it is generated by some
context-free grammar.  

Let $\Sigma$ be a finite generating set of the group $G$ and let
$h\colon(\Sigma\cup \Sigma^{-1})^*\to G$ be the canonical monoid homomorphism.
The \emph{word problem} and the \emph{co-word problem} \emph{(with respect to
$\Sigma\cup\Sigma^{-1}$)} of $G$ are the languages
\begin{align*}
&\{ w\in (\Sigma\cup\Sigma^{-1})^* \mid h(w)=1 \} ~~~\text{and} \\
&\{ w\in (\Sigma\cup\Sigma^{-1})^* \mid h(w)\ne 1 \}
\end{align*}
respectively. Since it does not depend on the chosen generating set whether the
word problem or the co-word problem are context-free~\cite{HoReRoTh2005}, we
may define a group $G$ to be \emph{(co-)context-free} if its (co-)word problem
is a context-free language. Co-context-free groups were introduced by Holt,
Rees, R\"{o}ver, and Thomas~\cite{HoReRoTh2005} and shown to significantly
extend the class of context-free groups (which are, by a well-known result of
Muller and Schupp and Dunwoody, precisely the virtually free
groups~\cite{MullerSchupp1983,Dunwoody1985}): The class of co-context-free
groups is closed under taking direct products, taking restricted standard
wreath products with a context-free top-group, passing to finitely generated
subgroups and finite index overgroups.  Furthermore, Lehnert and
Schweitzer~\cite{LeSch2007} have shown that the Higman-Thompson groups are
co-context-free as well.

\begin{theorem}\label{kpcocf}
Every co-context-free group has a decidable knapsack problem.
\end{theorem}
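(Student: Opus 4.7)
The plan is to reduce $\KP{G}{X}$ to deciding emptiness of the complement of an effectively constructed semilinear subset of $\mathbb{N}^k$. Fix a finite generating set $\Sigma$ of $G$ together with the canonical monoid homomorphism $h\colon(\Sigma\cup\Sigma^{-1})^*\to G$; by hypothesis the co-word problem $C=\{w\in(\Sigma\cup\Sigma^{-1})^*\mid h(w)\neq 1\}$ is context-free. Given an instance $g_1,\ldots,g_k,g\in G$, I would pick word representatives $u_1,\ldots,u_k,u\in(\Sigma\cup\Sigma^{-1})^*$ and observe that the instance has a solution if and only if there exists $(n_1,\ldots,n_k)\in\mathbb{N}^k$ with
$$u_1^{n_1}u_2^{n_2}\cdots u_k^{n_k}u^{-1}\notin C.$$

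The key ingredient I plan to invoke is Ginsburg's classical theorem on bounded context-free languages: for every context-free language $L$ over an alphabet $\Delta$ and every finite sequence $w_1,\ldots,w_m\in\Delta^*$, the set
$$\{(n_1,\ldots,n_m)\in\mathbb{N}^m \mid w_1^{n_1}w_2^{n_2}\cdots w_m^{n_m}\in L\}$$
is semilinear, and a semilinear representation can be computed effectively from a grammar for $L$ and the words $w_i$. Applying this with $L=C$ and with the word sequence $(u_1,\ldots,u_k,u^{-1})$, and intersecting the resulting semilinear set with the semilinear set $\mathbb{N}^k\times\{1\}$, I obtain an effective semilinear representation of
$$S_C=\{(n_1,\ldots,n_k)\in\mathbb{N}^k \mid u_1^{n_1}\cdots u_k^{n_k}u^{-1}\in C\}.$$

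To finish I would invoke the effective closure of semilinear sets (equivalently, Presburger-definable sets) under Boolean operations to compute a semilinear representation of $\mathbb{N}^k\setminus S_C$, and then test this set for non-emptiness, which is trivial once a representation as a finite union of linear sets is at hand. By the reduction above, the given knapsack instance has a solution precisely when this complement is non-empty.

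The only delicate point is that the version of Ginsburg's theorem invoked must produce an \emph{effective} semilinear representation of the exponent set rather than merely asserting its existence; both this effective version and the effective Boolean closure of semilinear sets are well-documented classical results, so no real obstacle arises. The proof thus reduces $\KP{G}{X}$ to two standard library facts applied to the co-word problem, together with the elementary reformulation in the first paragraph.
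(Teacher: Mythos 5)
Your proposal is correct and follows essentially the same route as the paper: both reduce the knapsack instance to testing whether the effectively semilinear set of exponent tuples whose associated word lies in the co-word problem covers all of $\N^k$, and then use effective complementation of semilinear sets to decide this. The only difference is packaging: you invoke Ginsburg's theorem on bounded context-free languages as a black box, whereas the paper assembles the same fact from Parikh's theorem together with the effective closure of context-free languages under inverse homomorphisms, intersection with regular languages, and homomorphisms.
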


Note that this means in particular that the wreath product $\Z\wr\Z$ has
a decidable knapsack problem, which is in contrast to the fact that this
group has an undecidable submonoid membership problem~\cite{LoStZe2015}.
\begin{proof}[Proof of \cref{kpcocf}]
Let $W$ be the co-word problem of $G$ with respect to $\Sigma\cup\Sigma^{-1}$
and let $W$ be context-free. 

We will need some terminology. A language $L\subseteq X^*$ is called
\emph{regular} if it can be obtained from the empty set and the
singletons $\{x\}$, $x\in X$, by the operations 
\begin{itemize}
\item \emph{union}, which turns $K\subseteq X^*$
and $M\subseteq X^*$ into $K\cup M$,
\item \emph{concatenation}, which turns
$K,M\subseteq X^*$ into $\{uv \mid u\in K,~v\in M\}$, and
\item \emph{iteration}, which maps $M\subseteq X^*$ to the submonoid of $X^*$
generated by $M$. 
\end{itemize}
For every context-free language $L\subseteq X^*$,
homomorphisms $\alpha\colon X^*\to Y^*$ and $\beta\colon Z^*\to X^*$ and regular
language $K\subseteq X^*$, the languages
$\alpha(L)$,  $\beta^{-1}(L)$, and $L\cap K$
are context-free as well and we can effectively compute a grammar for the
resulting languages~\cite{Berstel1979}. 

Suppose we are given $g_1,\ldots,g_k,g$ as an instance of the knapsack problem.
and let these elements be written as words $w_1,\ldots,w_k,w$, respectively,
over $\Sigma\cup\Sigma^{-1}$. Consider the alphabets $X=\{a_1,\ldots,a_k\}$,
$Y=X\cup \{a\}$, and the homomorphisms $\alpha\colon Y^*\to
(\Sigma\cup\Sigma^{-1})^*$, with $\alpha(a_i)=w_i$ for $1\le i\le k$ and
$\alpha(a)=w^{-1}$. Here, $w^{-1}$ is the word obtained by inverting the
generators and then reversing the word. Furthermore, observe that the language
$K=\{a_1\}^*\cdots\{a_k\}^*\{a\}$ is regular. Moreover, let $\beta\colon Y^*\to
X^*$ be the homomorphism with $\beta(a_i)=a_i$ for $1\le i\le k$ and
$\beta(a)=\varepsilon$. Then, the language
\[ M=\beta(\alpha^{-1}(W)\cap K)=\{ a_1^{e_1}\cdots a_k^{e_k} \mid g_1^{e_1}\cdots g_k^{e_k}\ne g\} \]
is effectively context-free. Clearly, there exist $e_1, \ldots, e_k \in \N$ with 
$g_1^{e_1} \cdots g_k^{e_k} = g$ if and only if  $M \neq \{a_1\}^*\cdots \{a_k\}^*$.
In order to decide the latter, we will employ
Parikh's Theorem.  

For each $w\in X^*$, let $\Psi(w)=(e_1,\ldots,e_k)$, where $e_i$ is the number
of occurrences of $a_i$ in $w$ for $1\le i\le k$. The resulting map $\Psi\colon
X^*\to \N^k$ is called the \emph{Parikh map}. Parikh's
Theorem~\cite{Parikh1966} states that for each context-free $L\subseteq X^*$,
its \emph{Parikh image} $\Psi(L)=\{\Psi(w) \mid w\in L\}$ is \emph{semilinear},
meaning that it is a finite union of sets of the form
\[ \{ v_0 ~+~ x_1\cdot v_1 ~+~ \cdots ~+~ x_n\cdot v_n \mid x_1,\ldots,x_n\in\N \}, \]
where $v_0\in\N^k$ and $v_1,\ldots,v_n\in \N^k$ are called the \emph{base
vectors} and the \emph{period vectors}, respectively. Again, Parikh's theorem
is effective, meaning that given a context-free grammar, we can compute base
vectors and period vectors for its semilinear Parikh image.

Furthermore, given a semilinear set $S\subseteq \N^k$, its complement
$\N^k\setminus S$ is effectively semilinear as well~\cite{GinsburgSpanier1966}.
Since $M=\{a_1\}^*\cdots\{a_k\}^*$ if and only if $\Psi(M)=\N^k$, we can
compute $\N^k\setminus \Psi(M)$ and check if it is non-empty. This concludes the proof of the theorem.
\end{proof}


\begin{thebibliography}{10}

\bibitem{AroBar09}
S.~Arora and B.~Barak.
\newblock {\em Computational Complexity - A Modern Approach}.
\newblock Cambridge University Press, 2009.

\bibitem{Aus67}
L.~Auslander.
\newblock {On a problem of Philip Hall}.
\newblock {\em Annals of Mathematics}, 86(2):112--116, 1967.

\bibitem{Berstel1979}
J.~Berstel.
\newblock {\em Transductions and Context-Free Languages}.
\newblock Teubner, 1979.

\bibitem{DuLiSha14}
M.~Duchin, H.~Liang, and M.~Shapiro.
\newblock Equations in nilpotent groups.
\newblock {\em Proceedings of the American Mathematical Society}, 2014.
\newblock DOI: \url{http://dx.doi.org/10.1090/proc/12630}.

\bibitem{Dunwoody1985}
M.~Dunwoody.
\newblock The accessibility of finitely presented groups.
\newblock {\em Inventiones mathematicae}, 81(3):449--457, 1985.

\bibitem{ElberfeldJT11}
M.~Elberfeld, A.~Jakoby, and T.~Tantau.
\newblock Algorithmic meta theorems for circuit classes of constant and
  logarithmic depth.
\newblock {\em Electronic Colloquium on Computational Complexity {(ECCC)}},
  18:128, 2011.

\bibitem{FrenkelNU14}
E.~Frenkel, A.~Nikolaev, and A.~Ushakov.
\newblock Knapsack problems in products of groups.
\newblock {\em Journal of Symbolic Computation}, 2015.
\newblock DOI: \url{doi:10.1016/j.jsc.2015.05.006}.

\bibitem{GinsburgSpanier1966}
S.~Ginsburg and E.~H. Spanier.
\newblock Semigroups, {Presburger} formulas, and languages.
\newblock {\em Pacific Journal of Mathematics}, 16(2):285--296, 1966.

\bibitem{GrSe04}
F.~Grunewald and D.~Segal.
\newblock On the integer solutions of quadratic equations.
\newblock {\em Journal f{\"{u}}r die reine und angewandte Mathematik},
  569:13--45, 2004.

\bibitem{Haa11}
C.~Haase.
\newblock {\em On the complexity of model checking counter automata}.
\newblock PhD thesis, University of {Oxford}, St Catherine's College, 2011.

\bibitem{HoReRoTh2005}
D.~F. Holt, S.~Rees, C.~E. R\"{o}ver, and R.~M. Thomas.
\newblock Groups with context-free co-word problem.
\newblock {\em Journal of the London Mathematical Society}, 71(3):643--657,
  2005.

\bibitem{KaMe79}
M.~I. Kargapolov and J.~I. Merzljakov.
\newblock {\em Fundamentals of the Theory of Groups}, volume~62 of {\em
  Graduate Texts in Mathematics}.
\newblock Springer-Verlag, New York, 1979.

\bibitem{Karp72}
R.~M. Karp.
\newblock Reducibility among combinatorial problems.
\newblock In R.~E. Miller and J.~W. Thatcher, editors, {\em Complexity of
  Computer Computations}, pages 85--103. Plenum Press, New York, 1972.

\bibitem{KonigL15}
D.~K{\"{o}}nig and M.~Lohrey.
\newblock Evaluating matrix circuits.
\newblock In {\em Proceedings of the 21st International Conference on Computing
  and Combinatorics, COCOON 2015}, volume 9198 of {\em Lecture Notes in
  Computer Science}, pages 235--248. Springer, 2015.

\bibitem{LeSch2007}
J.~Lehnert and P.~Schweitzer.
\newblock The co-word problem for the {H}igman-{T}hompson group is
  context-free.
\newblock {\em Bulletin of the London Mathematical Society}, 39(2):235--241,
  2007.

\bibitem{LeWi79}
J.~C. Lennox and J.~S. Wilson.
\newblock On products of subgroups in polycyclic groups.
\newblock {\em Archiv der Mathematik}, 33(4):305--309, 1979/80.

\bibitem{Loh14}
M.~Lohrey.
\newblock {\em The Compressed Word Problem for Groups}.
\newblock SpringerBriefs in Mathematics. Springer, 2014.

\bibitem{Lohrey15unitri}
M.~Lohrey.
\newblock Rational subsets of unitriangular groups.
\newblock {\em International Journal of Algebra and Computation},
  25(1-2):113--121, 2015.

\bibitem{LoStZe2015}
M.~Lohrey, B.~Steinberg, and G.~Zetzsche.
\newblock Rational subsets and submonoids of wreath products.
\newblock {\em Information and Computation}, 243(0):191--204, 2015.

\bibitem{Mat93}
Y.~V. Matiyasevich.
\newblock {\em Hilbert's Tenth Problem}.
\newblock MIT Press, Cambridge, Massachusetts, 1993.

\bibitem{MullerSchupp1983}
D.~E. Muller and P.~E. Schupp.
\newblock Groups, the theory of ends, and context-free languages.
\newblock {\em Journal of Computer and System Sciences}, 26(3):295--310, 1983.

\bibitem{MyNiUs14}
A.~Myasnikov, A.~Nikolaev, and A.~Ushakov.
\newblock Knapsack problems in groups.
\newblock {\em Mathematics of Computation}, 84:987--1016, 2015.

\bibitem{Parikh1966}
R.~J. Parikh.
\newblock On context-free languages.
\newblock {\em Journal of the ACM}, 13(4):570--581, 1966.

\bibitem{Swa67}
R.~Swan.
\newblock {Representations of polycyclic groups}.
\newblock {\em Proceedings of the American Mathematical Society}, 18:573--574,
  1967.

\bibitem{Wehr73}
B.~A.~F. Wehrfritz.
\newblock {\em Infinite Linear Groups}.
\newblock Springer, 1977.

\end{thebibliography}

\def\cprime{$'$} \def\cprime{$'$}

\end{document}